\theoremstyle{plain}
\newtheorem{theorem}{Theorem}
\newtheorem{lemma}[theorem]{Lemma}
\newtheorem{proposition}[theorem]{Proposition}
\theoremstyle{definition}
\newtheorem*{remark*}{Remark}
\newcommand{\pr}{\mathbf P}
\newcommand{\e}{\mathbf E}
\begin{document}
\title[Area under excursion]{Tail asymptotics for the area under the excursion of a  random walk with  heavy-tailed increments}

\author[Denisov]{Denis Denisov}
\address{School of Mathematics, University of Manchester, Oxford Road, Manchester M13 9PL, UK}
\email{denis.denisov@manchester.ac.uk}

\author[Perfilev]{Elena Perfilev} 
\address{Institut f\"ur Mathematik, Universit\"at Augsburg, 86135 Augsburg, Germany}
\email{Elena.Perfilev@math.uni-augsburg.de}

\author[Wachtel]{Vitali Wachtel} 
\address{Institut f\"ur Mathematik, Universit\"at Augsburg, 86135 Augsburg, Germany}
\email{vitali.wachtel@mathematik.uni-augsburg.de}

\begin{abstract}
We study tail behaviour of the distribution of the area under the positive excursion of a random walk which has negative drift and heavy-tailed increments. 
We determine the asymptotics for tail probabilities for the area.
\end{abstract}
\keywords{Random walk, subexponential distribution, heavy-tailed distribution, integrated random walk}
\subjclass{Primary 60G50; Secondary 60G40, 60F17} 
\maketitle
%%%%%%%%%%%%%%%%%%%%%%%%%%%%%%%%%%%%%%%%%%%%%%%%%%%%%%%%%%%%%%%%%%%%%%%%%%%%%%%%%%%%%%%%%%%%%%%%%%%%%%%%
%%%%%%%%%%%%%%%%%%%%%%%%%%%%%%%%%%%%%%%%%%%%%%%%%%%%%%%%%%%%%%%%%%%%%%%%%%%%%%%%%%%%%%%%%%%%%%%%%%%%%%%%
%%%%%%%%%%%%%%%%%%%%%%%%%%%%%%%%%%%%%%%%%%%%%%%%%%%%%%%%%%%%%%%%%%%%%%%%%%%%%%%%%%%%%%%%%%%%%%%%%%%%%%%%

\section{Introduction and statement of results}

Let $\{S_n; n\geq 1\}$ be a random walk with i.i.d. increments $\lbrace X_k; k\geq 1\rbrace$.
We shall assume that the increments have negative expected value, $\e X_1=-a$. 
Let $\overline F(x)=\pr (X_1>x)$ be the tail distribution function of $X_1$. 
Let  
\[
\tau:=\min\lbrace n\geq1: S_n\leq 0 \rbrace
\]
be the first time the random walk exits the positive half-line. 
We consider the area under the random walks excursion $\{S_1,S_2,\ldots,S_{\tau-1}\}$:
\[
A_\tau:=\sum_{k=0}^{\tau-1}S_k.
\]
Since $\tau$ is finite almost surely, the area $A_\tau$ is finite as well. 
In this note we will study asymptotics for $\pr(A_\tau>x),$ as $x\to \infty$, 
in the case when distribution of increments is  heavy-tailed. 
This paper continues the research of~\cite{PW17}, where the light-tailed case has been considered. 

The heavy-tailed asymptotics for $\pr(A_\tau>x)$ was studied previously 
by Borovkov, Boxma and Palmowski~\cite{BBP03}. 
They considered the case when the increments of the random walk have a distribution with regularly varying tail, that is $\overline F(x)=x^{-\alpha }L(x),$ where $L(x)$ is a slowly varying function. 
For $\alpha>1$ they showed 
\begin{equation}
\label{reg.tail} 
\pr(A_\tau>x)\sim\e\tau\overline F(\sqrt{2ax}),\quad x\to\infty.
\end{equation}
These asymptotics can be explained by a traditional heavy-tailed one big jump heuristics. 
In order to have a huge area, the random walk should have a large jump, say $y$, at the very beginning of the excursion. 
After this jump the random walk goes down along the line $y-an$ according to the Law of Large Numbers. 
Thus, the duration of the excursion should approximately be around $y/a$. 
As a result, the area will be of order $y^2/2a$. 
Now, from the equality $x=y^2/2a$ one infers that a jump of order $\sqrt{2ax}$ is needed. 
Since the same strategy is valid for the maximum $M_\tau:=\max_{n<\tau}S_n$ of the first excursion, one can rewrite \eqref{reg.tail} in the
following way:
\[
\mathbf{P}(A_\tau>x)\sim\mathbf{P}(M_\tau>\sqrt{2ax}),\quad x\to\infty.
\]

However, the class of regularly varying distributions does not include all subexponential distributions and excludes, in particular, log-normal distribution and Weibull distribution with parameter $\beta<1$. 
The asymptotics for these remaining cases have been put as an open problem in~\cite[Conjecture 2.2]{KP11} for a strongly related workload process.  
We will reformulate this conjecture as follows
\begin{equation}\label{eq:conj}
 \pr(A_\tau>x)\sim \pr\left(\tau>\sqrt{\frac{2x}{a}}\right), \quad x\to\infty, 
\end{equation}
when $F\in\mathcal S$ and $\mathcal S$ is a sublclass of subexponential distributions. 
Note that using the asymptotics for 
\begin{equation}\label{eq:asymp.tau}
\pr(\tau>x)\sim \e\tau\overline F(a x) 
\end{equation}
from~\cite{DS13}  for Weibull distributions with parameter $\beta<1/2$, one can see that in this case asymptotics~\eqref{eq:conj} is equivalent to~\eqref{reg.tail}. 
In this note we partially settle \eqref{eq:conj}. It is not difficult to show that the same arguments hold for the workload process and to prove the same asymptotics for the area of the workload process, thus settling the original~\cite[Conjecture 2.2]{KP11}.  
In passing we  note that it is doubtful that~\eqref{eq:conj} holds in full. 
The reason for that is that for both $\tau$ and $A_\tau$ the asymptotics~\eqref{eq:asymp.tau} and~\eqref{eq:conj} are no longer valid for Weibull distributions with parameter $\beta>1/2$. The analysis for $\beta>1/2$ involves more complicated optimisation procedure leading to a Cramer series and it is unlikely that the answers will be the same for the area and for the exit time.    

\subsection{Main results}

We will now present the results. We will start with the regularly varying case. 
In this case the  connection between the tails of $A_\tau$ and $M_\tau$ is strong and we will be able to use the asymptotics for $\pr(M_\tau>x)$ found in~\cite{FPZ}, see also a short proof in~\cite{D2005}, to find the asymptotics for $\pr(A_\tau>x)$. 
\begin{proposition}
\label{prop:joint}
We have the following two statements.
\begin{itemize}
 \item[(a)] If $\overline{F}(x):=\mathbf{P}(X_1>x)=x^{-\alpha}L(x)$ with some $\alpha\ge1$ and
            $\mathbf{E}|X_1|<\infty$ then, uniformly in $y\in[\varepsilon\sqrt{x},\sqrt{2ax}]$,
            \begin{equation}
            \label{joint.1}
            \mathbf{P}(A_\tau>x,M_\tau>y)\sim \mathbf{E}\tau\overline{F}(\sqrt{2ax}).
            \end{equation}

 \item[(b)] If $\overline{F}(x)\sim x^{-\varkappa} e^{-g(x)}$, 
 where $g(x)$ is a monotone continuously differentiable function satisfying 
 $\frac{g(x)}{x^\beta}\downarrow$ for $\beta\in(0,1/2)$, 
            and $\mathbf{E}|X_1|^\varkappa<\infty$ for some 
            $\varkappa>1/(1-\beta)$  then
            \eqref{joint.1}  holds uniformly in
            $y\in\left[\sqrt{2ax}-\frac{R\sqrt{2ax}}{g(\sqrt{2ax})},\sqrt{2ax}\right].$
\end{itemize}
\end{proposition}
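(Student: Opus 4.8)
The plan is to establish matching lower and upper bounds for $\pr(A_\tau>x,M_\tau>y)$, exploiting the ``one big jump'' picture: the only efficient way to make $A_\tau$ exceed $x$ is a single increment of size $\sqrt{2ax}(1+o(1))$ near the start of the excursion, and since such a jump also makes $M_\tau$ exceed any $y\le\sqrt{2ax}$, both events are asymptotically equivalent to $\{M_\tau\ge\sqrt{2ax}\}$, whose probability is $\sim\e\tau\overline F(\sqrt{2ax})$ by \cite{FPZ,D2005}.

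\emph{Lower bound.} Choose $\Delta=\Delta(x)$ with $x^{1/(2\varkappa)}=o(\Delta)$ and $\Delta=o(\sqrt x/g(\sqrt{2ax}))$ -- such a window is non-empty precisely because $\varkappa>1/(1-\beta)$ in case (b) (and trivially in case (a)) -- and note that then $\overline F(\sqrt{2ax}+\Delta)\sim\overline F(\sqrt{2ax})$ (regular variation in (a); in (b) use $g'(t)\le\beta g(t)/t$ to get $g(\sqrt{2ax}+\Delta)-g(\sqrt{2ax})\to0$). For $k\ge0$ and fixed $N$ consider the disjoint events $\{\tau>k,\ \sum_{j\le k}S_j\le N,\ X_{k+1}>\sqrt{2ax}+\Delta,\ R_k\}$, where $R_k$ is the event that the walk after time $k+1$ descends so that the rest of the excursion contributes area at least $x$; since the post-jump area equals $(S_k+X_{k+1})^2/(2a)$ minus a fluctuation of order $x^{1/2+1/(2\varkappa)}$ (controlled via $\e|X_1|^\varkappa<\infty$), the choice $\Delta\gg x^{1/(2\varkappa)}$ makes $\pr(R_k\mid\dots)\ge1-o(1)$. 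Summing over $k$, using independence and long-tailedness of $F$, and then letting $x\to\infty$ and $N\to\infty$ (so that $\sum_k\pr(\tau>k,\sum_{j\le k}S_j\le N)\uparrow\e\tau$), one gets $\pr(A_\tau>x,M_\tau>y)\ge(1-o(1))\e\tau\overline F(\sqrt{2ax})$ uniformly over $y\le\sqrt{2ax}$, because $X_{k+1}>\sqrt{2ax}+\Delta$ already forces $M_\tau>y$.

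\emph{Upper bound.} I would decompose the excursion at the time $\nu$ where $M_\tau$ is attained, writing $A_\tau$ as the area of the pre-maximum piece plus that of the post-maximum piece. Conditionally on $M_\tau=m$, the post-maximum piece runs from $m$ down to $(-\infty,0]$ in $\asymp m/a$ steps and contributes $m^2/(2a)$ up to an $O(x^{3/4})$-scale fluctuation, while the pre-maximum piece is either of bounded length (negligible area) or contains a secondary peak of height $h<m$ contributing $\approx h^2/(2a)$; thus on $\{A_\tau>x\}$ one needs $m^2+h^2\gtrsim 2ax$. This gives two cases. If $m\ge\sqrt{2ax}-x^{1/4}\omega(x)$ for a suitable slowly growing $\omega$, then directly $\pr(A_\tau>x,M_\tau>y)\le\pr(M_\tau\ge\sqrt{2ax}-x^{1/4}\omega)\sim\e\tau\overline F(\sqrt{2ax}-x^{1/4}\omega)\sim\e\tau\overline F(\sqrt{2ax})$, the last step using regular variation in (a) and $\beta<1/2$ in (b). If $m<\sqrt{2ax}-x^{1/4}\omega$, then producing area $>x$ forces either a secondary jump building a peak of height $h\gtrsim\sqrt{2ax-m^2}$, or an anomalously slow post-jump descent (a positive deviation of the integral of the centred walk); splitting the range of $m$ dyadically, bounding $\pr(M_\tau\in dm)$ through $\pr(M_\tau>m)\sim\e\tau\overline F(m)$, and estimating the conditional probability of closing the deficit $x-m^2/(2a)$, one checks this contribution is $o(\e\tau\overline F(\sqrt{2ax}))$. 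A preliminary truncation of the increments at a level $h(x)$ with $h(x)\to\infty$, $h(x)=o(\sqrt x)$, makes the ``no large jump at all'' part negligible by light-tailed large-deviation bounds for the truncated walk.

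\emph{Main obstacle.} The heart of the argument is the uniform control of the second case above: the competing strategies ``two moderately large jumps'' (two peaks of comparable heights near $\sqrt{ax}$) and ``one large jump followed by a slow descent'' make $A_\tau$ exceed $x$ with probability only polynomially smaller than $\overline F(\sqrt{2ax})$, and showing these are genuinely $o(\e\tau\overline F(\sqrt{2ax}))$ is exactly where the restriction $y\in[\sqrt{2ax}-R\sqrt{2ax}/g(\sqrt{2ax}),\sqrt{2ax}]$ (which keeps the deficit $x-m^2/(2a)$ small), the assumption $\beta<1/2$, and the moment condition $\varkappa>1/(1-\beta)$ are simultaneously used. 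In case (a) each competing scenario loses an extra power of $x$, so these estimates are softer and the wider range $[\varepsilon\sqrt x,\sqrt{2ax}]$ of $y$ is admissible.
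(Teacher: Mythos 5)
Your lower bound is sound and is essentially the paper's argument (a big jump of size $\sqrt{2ax}+\Delta$ at a bounded time, followed by a Marcinkiewicz--Zygmund-controlled linear descent; the window for $\Delta$ is indeed nonempty exactly because $\varkappa>1/(1-\beta)$). The problem is the upper bound, where you leave the decisive step unproven. You split according to the value $m$ of $M_\tau$ and must show that
\[
\mathbf{P}\bigl(A_\tau>x,\ M_\tau\in[y,\sqrt{2ax}-x^{1/4}\omega(x)]\bigr)=o\bigl(\overline F(\sqrt{2ax})\bigr),
\]
which requires quantitative large-deviation estimates for the ``two moderate jumps'' and ``one jump plus anomalously slow descent'' scenarios. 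You assert that ``one checks this contribution is $o(\mathbf{E}\tau\,\overline F(\sqrt{2ax}))$'' and then explicitly label it the main obstacle without carrying it out. This is not a routine verification: in case (a) with $y=\varepsilon\sqrt x$ the event $\{M_\tau\in[y,\sqrt{2ax}-x^{1/4}\omega]\}$ already has probability of the same order as $\overline F(\sqrt{2ax})$ (only a constant factor larger), so you must show that \emph{conditionally} on such a maximum the area rarely exceeds $x$; and the machinery you would need (truncated exponential bounds, control of the integrated centred walk) is precisely what the paper develops separately, and only under extra hypotheses, for Theorems 1 and 2. As written, the proof of the proposition is incomplete at its core.

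It is worth noting how the paper sidesteps this obstacle entirely, because the decomposition is different in a way that matters. Instead of conditioning on where the maximum is attained, the paper conditions on the first passage time $\sigma_y=\inf\{n<\tau:S_n>y\}$, which is finite on $\{M_\tau>y\}$ and has a proper limiting conditional law $q_k=\mathbf{P}(\tau>k-1)/\mathbf{E}\tau$. For fixed $k=\sigma_y$ the pre-crossing area is at most $(k-1)y=O(\sqrt x)=o(x)$, so by the Markov property the post-crossing trajectory started from the overshoot $v=S_{\sigma_y}$ must produce area $\approx x$; on the LLN event $B_v$ this forces $v\ge\sqrt{2ax}-O(x^{1/(2\varkappa)})$, while the complement $B_v^c$ contributes only $o(\mathbf{P}(\sigma_y=k))=o(\overline F(\sqrt{2ax}))$ by the \emph{qualitative} statement $\mathbf{P}(B_v^c\mid S_0=v)\to0$. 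Thus all your ``competing scenarios'' are absorbed into an error term whose prefactor $\mathbf{P}(\sigma_y=k)\sim q_k\mathbf{E}\tau\,\overline F(y)=O(\overline F(\sqrt{2ax}))$ is already of the right order, and no large-deviation bound is needed. The remaining ingredient is a one-big-jump lemma for the overshoot, $\mathbf{P}(S_k>v,\sigma_y=k)\sim\overline F(v)\mathbf{P}(\tau>k-1)$ uniformly in $v>y$, plus the tail-insensitivity $\overline F(\sqrt{2ax}\pm\delta x^{1/(2\varkappa)})\sim\overline F(\sqrt{2ax})$ (this is where $\varkappa>1/(1-\beta)$ enters again) and $\overline F(y)=O(\overline F(\sqrt{2ax}))$ on the stated range of $y$. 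If you want to salvage your route through the maximum, you would have to supply the missing estimates; switching to the first-passage decomposition makes them unnecessary.
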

This statement implies obviously the following lower bound for the tail of $A_\tau$:
\begin{equation}
\label{lower.bound}
\liminf_{x\to\infty}\frac{\pr(A_\tau>x)}{\overline{F}(\sqrt{2ax})}\ge1.
\end{equation}
Furthermore, using this proposition, one can give an alternative proof of~\eqref{reg.tail} under the assumption of the regular variation of $\overline F$,
which is much simpler than the original one in~\cite{BBP03}.
We first split the event $\lbrace A_\tau>x\rbrace$ into two parts
\begin{align*}
\lbrace A_\tau>x\rbrace=\lbrace A_\tau>x, M_\tau>y\rbrace\cup\lbrace A_\tau>x, M_\tau\leq y\rbrace.
\end{align*}
Clearly, 
\[
\lbrace A_\tau>x, M_\tau\leq y\rbrace\subseteq\lbrace\tau>x/y\rbrace,
\]
and, therefore,
\begin{align}
\label{atausplit}
\pr(A_\tau>x, M_\tau>y)\leq\pr(A_\tau>x)\leq\pr(A_\tau>x, M_\tau>y)+\pr(\tau>x/y).
\end{align}
When $\alpha>1$, according to Theorem I in Doney~\cite{Doney89} or~\cite[Theorem 3.2]{DS13}, 
\begin{equation*}
\pr(\tau>t)\sim\e\tau\bar{F}\left(a t\right)\quad\text{as }t\to\infty.
\end{equation*}
Choosing $y=\varepsilon \sqrt{x}$ and recalling that $\overline{F}$ is regularly varying,
we get
\begin{equation}
\label{tau_tail}
\pr(\tau>x/y)=\pr(\tau>\sqrt{x}/\varepsilon)\sim \varepsilon^\alpha\mathbf{E}\tau\overline{F}(\sqrt{x}).
\end{equation}
It follows from the first statement of Proposition~\ref{prop:joint} that
\begin{equation*}
\mathbf{P}(A_\tau>x,M_\tau>\varepsilon \sqrt{x})\sim\mathbf{E}\tau\overline{F}(\sqrt{2ax}).
\end{equation*}
Plugging this and~\eqref{tau_tail} into~\eqref{atausplit}, we get, as $x\to\infty$,
\begin{equation*}
\mathbf{E}\tau\overline{F}(\sqrt{2ax})(1+o(1))
\le\mathbf{P}(A_\tau>x)\le
\mathbf{E}\tau\overline{F}(\sqrt{2ax})\left(1+\frac{\varepsilon^\alpha}{(2a)^{\alpha/2}}+o(1)\right).
\end{equation*}
Letting $\varepsilon\to0$, we arrive at \eqref{reg.tail}.

The case of semi-exponential distributions is more complicated. 
In particular it seems that in this case there is  a regime  when the asymptotics \eqref{reg.tail} are no longer valid. We will treat this case by using the exponential bounds similar to Section~2.2 in~\cite{PW17} and asymptotics for $\pr(\tau>x)$ from~\cite{DS13} and~\cite{DDS08}. 

First we will introduce a sublclass of subexponential distributions that we will consider. We will assume that $\e[X_1^2]=\sigma^2<\infty$.
Without loss of generality we may assume that $\sigma=1$.
 Let 
    \begin{equation}
      \label{sc1}
    \overline F(x) \sim e^{-g(x)}x^{-2}, \quad  x\to \infty,
    \end{equation}
    where $g(x)$ is an eventually increasing function such that 
    eventually 
    \begin{equation}
      \label{sc2}
    \frac{g(x)}{x^{\gamma_0}}\downarrow 0, \quad x\to\infty,
    \end{equation}
    for some $\gamma_0\in(0,1)$. 
 Due to the asymptotic nature of equivalence in~\eqref{sc1}  without loss of generality we may assume that $g$ is continuously differentiable and  
that~\eqref{sc2} hold for all $x>0$. 
  Clearly, monotonicity in \eqref{sc2} implies 
    \begin{equation}
      \label{sc3}
      g'(x)\le \gamma_0 \frac{g(x)}{x}
    \end{equation}  
    for all sufficiently large $x$. 
    Using the Karamata representation theorem one can show that 
    this class of subexponential distributions  includes regularly varying  
    distributions $\overline F(x)\sim x^{-r}L(x),$ for $r>2$. Also, it is not difficult 
    to show that lognormal distributions and  Weibull distributions 
    ($\overline F(x) \sim e^{-x^\beta},\beta\in(0,1)$) 
    belong to our class of distributions. 
    Previously this class appeared in~\cite{R93} 
    for the analysis of 
    large deviations of sums of subexponential random variables on the whole axis. 

    Now we are able to give rough(logarithmic) asymptotics for $\gamma_0\le 1$. 
\begin{theorem}\label{cor:logarithmic.upper.bound}
    Let $\e[X_1]=-a<0$ and $\mbox{Var}(X_1)<\infty$. 
    Assume that the distribution function $F$ of $X_j$ satisfies~\eqref{sc1} and that~\eqref{sc2} holds with $\gamma_0=1$.
    Then, there exits a constant $C>0$ such that 
    \[
        \pr(A_\tau>x)\le Cx^{1/4} \exp
                \left\{
                    -g(\sqrt{2ax})\sqrt{
                                1-\frac{2Cg(\sqrt{2ax})}{a\sqrt{2ax}}
                            }
                \right\}.
    \]
    Furthermore, for any $\varepsilon>0$ there exist $C>0$ such that, 
    \[
        \liminf_{x\to\infty}\frac{\pr(A_\tau>x)}{\overline F(\sqrt{2ax}+Cx^{1/4+\varepsilon})}\ge \e\tau. 
    \]
    In, particular, if $\gamma_0<1$ then 
   \[
        \lim_{x\to\infty}\frac{\ln \pr(A_\tau>x)}{\ln \overline F(\sqrt{2ax})}=1. 
    \]
\end{theorem}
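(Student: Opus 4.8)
The plan is to prove the lower and upper bounds separately --- the former by a one-big-jump construction, the latter by exponential (Chernoff-type) estimates for a truncated walk --- and then to deduce the logarithmic identity by taking logarithms. For the lower bound, fix $\varepsilon>0$ and put $z_x:=\sqrt{2ax}+Cx^{1/4+\varepsilon}$; for $j\ge0$ consider the event $B_j=\{S_1>0,\dots,S_j>0,\ X_{j+1}>z_x\}$. On $B_j$ one has $v:=S_{j+1}>z_x$, and from the level $v$ the walk evolves as $v+S'_n$ with $S'$ an independent copy of $S$, so by the law of large numbers it stays positive for about $v/a$ steps and the area it generates equals $\tfrac{v^2}{2a}+O(v)+\sum_{n\le v/a}S'_n$. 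Since $\tfrac{v^2}{2a}\ge\tfrac{z_x^2}{2a}\ge x+\tfrac{\sqrt2\,C}{\sqrt a}\,x^{3/4+\varepsilon}$ while $\sum_{n\le v/a}S'_n$ has standard deviation of order $(v/a)^{3/2}\asymp x^{3/4}$ and, by a Kolmogorov-type maximal inequality for the integrated walk, stays below $x^{3/4+\varepsilon/2}$ with high probability, one has, for $C=C(\varepsilon)$ large enough, $\pr(A_\tau>x\mid B_j)\ge1-o(1)$ uniformly in $j$. As $\{X_{j+1}>z_x\}$ is independent of $\{S_1>0,\dots,S_j>0\}$, and the overlaps between the $B_j$ are negligible (a first-big-jump inclusion--exclusion; the two-big-jump events contribute a factor $O(z_x\overline F(z_x))=o(1)$ relative to the leading term), summing over $j$ gives
\[
\pr(A_\tau>x)\ge(1+o(1))\sum_{j\ge0}\pr(\tau>j)\,\overline F(z_x)=(1+o(1))\,\e\tau\,\overline F\bigl(\sqrt{2ax}+Cx^{1/4+\varepsilon}\bigr),
\]
which is the asserted lower bound.

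For the upper bound I would follow the scheme of \cite[Section~2.2]{PW17}. Put $h=h(x):=\sqrt{2ax}-Cg(\sqrt{2ax})/a$ and let $\nu:=\#\{k\le\tau:X_k>h\}$. By Wald's identity $\pr(\nu\ge1)\le\e\nu=\e\tau\,\overline F(h)$, and a Taylor estimate for $g$ controlled by \eqref{sc3} turns $\overline F(h)=e^{-g(h)}h^{-2}(1+o(1))$ into at most a constant multiple of $\exp\{-g(\sqrt{2ax})\sqrt{1-2Cg(\sqrt{2ax})/(a\sqrt{2ax})}\}$ --- this is the main term. On the complementary event $\{\nu=0\}$ all increments inside the excursion are $\le h$, so using $e^{\lambda A_n}=\prod_{k=0}^{n-1}e^{\lambda S_k}$ one gets, for $\lambda>0$,
\[
\pr(A_\tau>x,\ \nu=0)\le e^{-\lambda x}\sum_{n\ge1}\e\bigl[e^{\lambda A_n}\,\ind\{S_1>0,\dots,S_{n-1}>0,\ X_1\le h,\dots,X_n\le h\}\bigr].
\]
Each summand is dominated by $T_{\lambda,h}^{\,n-1}\ind(0)$, where $T_{\lambda,h}f(s)=\e[e^{\lambda(s+X)}\ind\{s+X>0,\ X\le h\}\,f(s+X)]$, whose Perron eigenvalue $\rho(\lambda,h)$ produces a geometric decay; cutting off excursions longer than a well-chosen horizon by the tail asymptotics $\pr(\tau>N)\sim\e\tau\,\overline F(aN)$ of \cite{DS13,DDS08} and summing the remaining terms, one obtains $\pr(A_\tau>x,\ \nu=0)\le\mathrm{poly}(x)\,e^{-\lambda^\ast(h)x}$ with $\rho(\lambda^\ast(h),h)=1$. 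Since an excursion whose increments are capped at $h<\sqrt{2ax}$ cannot reach area $x$ by a single optimal bump and must therefore pay strictly more than $g(\sqrt{2ax})$, one checks $\lambda^\ast(h)x\ge g(\sqrt{2ax})\sqrt{1-2Cg(\sqrt{2ax})/(a\sqrt{2ax})}$, while the surviving polynomial factor equals $x^{1/4}$ up to a constant --- it arises as $\sqrt x$ (from optimising the geometric series in $n$) corrected by $x^{-1/4}$ (a Bahadur--Rao type local-limit factor on the natural excursion time scale $n\asymp\sqrt x$). Adding the two parts gives the stated upper bound.

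For the logarithmic identity, take logarithms in the two bounds. Since $\gamma_0<1$, \eqref{sc2} gives $g(\sqrt{2ax})/\sqrt{2ax}=o\bigl(x^{(\gamma_0-1)/2}\bigr)\to0$, so the factor $\sqrt{1-2Cg(\sqrt{2ax})/(a\sqrt{2ax})}$ tends to $1$ and $g\bigl(\sqrt{2ax}+Cx^{1/4+\varepsilon}\bigr)=g(\sqrt{2ax})(1+o(1))$; hence both bounds are $e^{-g(\sqrt{2ax})(1+o(1))}$ up to polynomial factors, and whenever $g$ grows faster than $\ln x$ (so that $\tfrac14\ln x=o(g(\sqrt{2ax}))$ and $\ln\overline F(\sqrt{2ax})=-g(\sqrt{2ax})+O(\ln x)=-g(\sqrt{2ax})(1+o(1))$) one obtains $\ln\pr(A_\tau>x)=-g(\sqrt{2ax})(1+o(1))=\ln\overline F(\sqrt{2ax})(1+o(1))$. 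When $\overline F$ is regularly varying one appeals instead to \eqref{reg.tail}, which gives $\pr(A_\tau>x)\sim\e\tau\,\overline F(\sqrt{2ax})$ outright; in either case the ratio of logarithms tends to $1$.

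The crux --- and the step I expect to be the main obstacle --- is the upper bound on $\pr(A_\tau>x,\ \nu=0)$: one must control the transfer operator $T_{\lambda,h}$, that is, the area of a negative-drift walk whose right tail has been truncated at $h$, \emph{uniformly} as $h\to\infty$ (the truncation is genuinely necessary, since $F$ has no exponential moment), and extract simultaneously the sharp exponential rate $\lambda^\ast(h)x$ and the correct polynomial prefactor $x^{1/4}$ from a Perron--Laplace analysis, all calibrated so that the resulting loss matches precisely the factor $\sqrt{1-2Cg(\sqrt{2ax})/(a\sqrt{2ax})}$ coming from \eqref{sc3}.
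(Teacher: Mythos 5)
Your lower bound and your derivation of the logarithmic identity from the two bounds are sound and essentially coincide with the paper's argument (the paper's version restricts the jump time to $k\le N$ and lets $N\to\infty$ at the end, which sidesteps your inclusion--exclusion over all $j$, but the idea is the same). The genuine gap is in the upper bound on the no-big-jump event, which you yourself flag as the main obstacle and do not carry out. The transfer-operator route is both harder than necessary and doubtful as stated: the kernel of $T_{\lambda,h}$ carries the factor $e^{\lambda(s+X)}$ on an unbounded state space, with $\lambda=\lambda(x)\to0$ and $h=h(x)\to\infty$ simultaneously, so the existence of a Perron eigenvalue with uniform spectral control is far from automatic; moreover a single fixed tilting parameter $\lambda$ cannot capture the optimal strategy, because in $e^{\lambda A_n}=e^{\lambda\sum_{j}(n-j)X_j}$ the increment $X_j$ is effectively tilted by $\lambda(n-j)$, a quantity that varies along the trajectory and must be kept below $g(h)/h$ for the truncated moment generating function to be controlled.

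The observation you are missing is that no Markov or operator structure is needed: $A_n=\sum_{j}(n-j)X_j$ is a weighted sum of \emph{independent} increments, so $\e\bigl[e^{(\lambda/n)A_n};\overline X_n\le y\bigr]$ factorizes exactly into $\prod_j\varphi_y\bigl(\lambda(n-j+1)/n\bigr)$ with $\varphi_y(t)=\e[e^{tX};X\le y]$. Choosing $\lambda=g(y)/y$ and splitting each factor at $X=1/\lambda_{n,j}$, the elementary bound $e^u\le1+u+u^2$ together with \eqref{sc1}--\eqref{sc2} gives $\varphi_y(\lambda_{n,j})\le1-a\lambda_{n,j}+C\lambda_{n,j}^2$, hence $\pr(A_n>x,\overline X_n\le y)\le\exp\{-\lambda x/n-a\lambda n/2+C\lambda^2n\}$; this is Lemma~\ref{lem:chebyshev}. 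Summing over $n$ by Laplace's method (a modified Bessel function asymptotic) then produces in one stroke both the prefactor $x^{1/4}$ and the exponent $-2\lambda\sqrt{(a/2-C\lambda)x}=-g(\sqrt{2ax})\sqrt{1-2Cg(\sqrt{2ax})/(a\sqrt{2ax})}$, with the truncation level equal to $\sqrt{2ax}$ itself. In particular the square-root loss comes from the quadratic term $C\lambda^2 n$ in the Chernoff bound, not, as in your sketch, from lowering the truncation level to $h<\sqrt{2ax}$ --- a mechanism whose constants you would still have to verify. The big-jump part is then handled by the crude union bound $\sum_n\pr(\tau>n)\overline F(\sqrt{2ax})\le\e\tau\,\overline F(\sqrt{2ax})$, which is of lower order, so no separate ``main term'' bookkeeping is needed there.
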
  
To obtain the exact asymptotics we will impose a further assumption
\begin{equation}\label{sc4}
        xg'(x)\to \infty, \quad x\to \infty.
\end{equation}
This assumption implies that
\begin{equation}\label{sc5}
        \frac{g(x)}{\log x}\to \infty.
\end{equation}
In particular, it excludes all regularly varying distributions. 

\begin{theorem}\label{thm:exact.asymptotics}
    Let $\e[X_1]=-a<0$ and $\mathbf{Var}(X_1)<\infty$. Assume that the distribution function $F$ of $X_j$ satisfies~\eqref{sc1}, that~\eqref{sc2} holds with $\gamma_0<1/2$ and that~\eqref{sc4} holds.
    Then, 
    \[
        \pr(A_\tau>x) \sim \e\tau \overline F(\sqrt{2ax}), \quad x\to \infty. 
    \]
\end{theorem}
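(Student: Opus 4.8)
I would first observe that the lower bound is immediate from Proposition~\ref{prop:joint}(b). The hypotheses of Theorem~\ref{thm:exact.asymptotics} --- $\mathbf E|X_1|^2<\infty$, \eqref{sc1} with exponent $\varkappa=2$, and \eqref{sc2} with $\gamma_0<1/2$, so that $\varkappa=2>1/(1-\gamma_0)$ --- are a special case of those of Proposition~\ref{prop:joint}(b) with $\beta=\gamma_0$ (note $g(x)/x^{\gamma_0}\downarrow$ implies $g(x)/x^\beta\downarrow$ for every $\beta\ge\gamma_0$). Taking $y=\sqrt{2ax}$ in~\eqref{joint.1} then yields
\[
\mathbf P(A_\tau>x)\ge\mathbf P(A_\tau>x,\,M_\tau>\sqrt{2ax})\sim\mathbf E\tau\,\overline F(\sqrt{2ax}),
\]
hence $\liminf_{x\to\infty}\mathbf P(A_\tau>x)/\overline F(\sqrt{2ax})\ge\mathbf E\tau$. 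The whole content of the theorem is therefore the matching upper bound $\mathbf P(A_\tau>x)\le(1+o(1))\,\mathbf E\tau\,\overline F(\sqrt{2ax})$.

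For the upper bound I would fix a constant $R>0$ small enough for Proposition~\ref{prop:joint}(b) to apply, set $y_1:=\sqrt{2ax}-R\sqrt{2ax}/g(\sqrt{2ax})$, fix $\varepsilon\in(0,1)$, and split
\[
\mathbf P(A_\tau>x)\le \mathbf P(A_\tau>x,\,M_\tau>y_1)+\mathbf P\Bigl(\tau>\tfrac1\varepsilon\sqrt{\tfrac{2x}{a}}\Bigr)+\mathbf P\Bigl(A_\tau>x,\,M_\tau\le y_1,\,\tau\le\tfrac1\varepsilon\sqrt{\tfrac{2x}{a}}\Bigr).
\]
The first term is $\sim\mathbf E\tau\,\overline F(\sqrt{2ax})$ by Proposition~\ref{prop:joint}(b) with $y=y_1$ (and $\overline F(y_1)\asymp\overline F(\sqrt{2ax})$, since \eqref{sc3} gives $g(\sqrt{2ax})-g(y_1)\le\gamma_0 R+o(1)$). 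For the second term I would use the exit-time asymptotics $\mathbf P(\tau>t)\sim\mathbf E\tau\,\overline F(at)$, valid in the present class by~\cite{DS13} and~\cite{DDS08} --- this is where \eqref{sc4} is needed --- which gives $\mathbf P(\tau>\varepsilon^{-1}\sqrt{2x/a})\sim\mathbf E\tau\,\overline F(\varepsilon^{-1}\sqrt{2ax})=o(\overline F(\sqrt{2ax}))$ for every fixed $\varepsilon<1$, because \eqref{sc4} forces $\overline F((1+c)u)=o(\overline F(u))$ for any $c>0$. Everything thus reduces to showing that the third term is $o(\overline F(\sqrt{2ax}))$.

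On the event in the third term the walk makes at most $\varepsilon^{-1}\sqrt{2x/a}$ steps up to time $\tau$, and, since $S_{k-1}>0$ for $1\le k\le\tau$ forces $X_k<M_\tau\le y_1$, replacing each increment $X_k$ by $X_k\wedge y_1$ changes neither $\tau$ nor $A_\tau$; so we may work with a random walk having bounded increments and negative drift. The key point is that, because $M_\tau\le y_1<\sqrt{2ax}$, a deterministic decay at rate $a$ from the maximal level of the excursion produces area at most $y_1^2/(2a)=x-\Delta$ with $\Delta\asymp x/g(\sqrt{2ax})$; hence $\{A_\tau>x\}$ forces the excursion to accumulate an extra area of at least $\Delta$ beyond this deterministic profile, which in turn requires the walk to decay strictly slower than rate $a$ over a portion of the excursion. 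Quantifying this --- conditioning on the level $v\le y_1$ attained by the excursion (whose ``density'' is $\asymp\mathbf E\tau\,|d\overline F(v)|$) and applying an exponential (Chernoff / Fuk--Nagaev type) bound, of the kind used in Section~2.2 of~\cite{PW17}, for the area under the excursion of the truncated negative-drift walk started from $v$ --- bounds the probability of accumulating the required extra area $\Delta_v=x-v^2/(2a)$ by $\exp\{-c\Delta_v^2/v^3\}$, which near $v=y_1$ is $\exp\{-c'\sqrt x/g^2(\sqrt{2ax})\}$. Multiplying by $\overline F(v)\asymp\overline F(\sqrt{2ax})$ and integrating over $v\le y_1$ (which contributes only a factor polynomial in $g(\sqrt{2ax})$) leaves
\[
o\bigl(\overline F(\sqrt{2ax})\bigr),
\]
precisely because $\gamma_0<1/2$ forces $g^2(\sqrt{2ax})=o(x^{\gamma_0})=o(\sqrt x)$, so that $\exp\{-c'\sqrt x/g^2(\sqrt{2ax})\}$ decays faster than any power of $x$. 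Combining the three terms gives $\mathbf P(A_\tau>x)\le(1+o(1))\,\mathbf E\tau\,\overline F(\sqrt{2ax})$, as required.

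I expect the main obstacle to be this last exponential inequality: one needs a Chernoff-type estimate for the probability that the area under the excursion of a negative-drift walk started from level $v$ exceeds $x$, with increments truncated at a level $y_1$ that itself grows with $x$, uniformly in $v$ up to $y_1$ and sharp enough that the rate $\Delta_v^2/v^3$ is not lost. Controlling the interplay between the truncation level, the duration $\asymp\sqrt x$ of the decay phase and the area deficit $\Delta\asymp x/g$, and in particular ruling out cheaper alternative strategies such as an anomalously long low excursion or a second moderate jump --- the latter excluded by subexponentiality in the form $\overline F(u_1)\overline F(u_2)=o(\overline F(u_1+u_2))$ within our class --- is the technical heart of the proof, and it is exactly this step that confines the result to $\gamma_0<1/2$.
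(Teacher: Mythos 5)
Your skeleton is reasonable and two of your three pieces are sound: the lower bound does follow from Proposition~\ref{prop:joint}(b) exactly as you say, and $\pr(\tau>\varepsilon^{-1}\sqrt{2x/a})=o(\overline F(\sqrt{2ax}))$ follows from \eqref{eq:ds13} together with \eqref{sc4} (as in Lemma~\ref{lem:p33}). The gap is the third term, which you correctly identify as the technical heart and then do not prove. The Gaussian-type inequality you invoke, $\exp\{-c\Delta_v^2/v^3\}$ for the excess area of the walk truncated at $y_1$, is asserted rather than established, and the bound that \emph{is} readily available does not close the argument: with increments truncated at $y_1\asymp\sqrt{x}$, the moment generating function estimate of Lemma~\ref{lem:chebyshev} is valid only for $\lambda\le g(y_1)/y_1$, and the resulting estimate for $\pr(A_\tau>x,\,M_\tau\le y_1,\,\tau\le\varepsilon^{-1}\sqrt{2x/a})$, after optimising over the excursion length, is of order $x^{5/4}\,\overline F(\sqrt{2ax})\,e^{-(1-\gamma_0)R(1+o(1))}$. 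Lowering the truncation level by only $Rh(x)=R\sqrt{2ax}/g(\sqrt{2ax})$ with $R$ \emph{fixed} gains a constant factor $e^{-(1-\gamma_0)R}$, which cannot absorb the polynomial prefactors coming from the sum over the excursion length and from the $x^{-2}$ in \eqref{sc1}. You cannot repair this by letting $R\to\infty$ with $x$, because Proposition~\ref{prop:joint}(b) controls $\pr(A_\tau>x,\,M_\tau>y)$ only down to $y=\sqrt{2ax}-Rh(x)$ with $R$ fixed; so your decomposition leaves an uncovered intermediate window. Your heuristic for bridging it also conflates the one-big-jump picture (level $v$ with ``density'' $|d\overline F(v)|$) with the event $\{M_\tau\le y_1\}$, on which the maximum need not be attained by a single jump.

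The paper closes exactly this window by splitting on the maximal \emph{increment} $\overline X_\tau$ rather than on $M_\tau$, in three ranges: $\overline X_\tau\le\sqrt{2ax}-Ch(x)\log x$ with $C>\frac{5/4}{1-\gamma_0}$, so that the exponential bound gains a factor $x^{-(1-\gamma_0)C}$ beating all polynomial prefactors (Lemma~\ref{lem:p1}); $\overline X_\tau>\sqrt{2ax}-h(x)/\log x$, bounded crudely by $\e\tau\,\overline F(y^*)\sim\e\tau\,\overline F(\sqrt{2ax})$ using insensitivity (Lemma~\ref{lem:p2}), which produces the main term; and the intermediate range, where the key device is to isolate the single large increment and \emph{re-truncate all other increments at the much lower level} $x^\varepsilon$, so that the Chernoff parameter can be taken as $g(x^\varepsilon)/x^\varepsilon\gg g(\sqrt{x})/\sqrt{x}$ and the bound becomes superpolynomially small (Lemma~\ref{lem:q1}); the possibility of two large increments is excluded separately (Lemma~\ref{lem:p32}). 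Some version of this re-truncation idea, or an actual proof of your Gaussian bound, is indispensable: as it stands, the proposal is missing its main step, precisely where the restriction $\gamma_0<1/2$ enters.
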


\subsection{Discussion and organisation of the paper}

In this note we provided exact asymptotics for the case $\gamma_0<1/2$. 
We believe that this restriction is not technical and the asymptotics for  
$\gamma_0\ge 1/2$ is different. This boundary is well-known, for example, 
the same bound appears in the analysis of the exact asymptotics for 
$\pr(\tau>n)$ and $\pr(S_n>an)$, see, correspondingly~\cite{DS13} and~\cite{DDS08}. 

The conjecture in~\cite{KP11} was formulated for the workload process of a single-server queue rather than the area under the random walk excursion. 
However,  one can prove analogous results  for  the L\'evy processes by essentially the same arguments. It is well-known that workload of the M/G/1 queue can be represent as a L\'evy process and thus our results can be transferred to this setting almost immediately.  We believe that the treatment of the workload of the general G/G/1 queue is not that different as well.

The paper is organised as follows. We will start by proving Proposition~\ref{prop:joint} in Section~\ref{sec:area.via.maximum}. 
Then we will derive a useful exponential bound and prove Theorem~\ref{cor:logarithmic.upper.bound} in Section~\ref{sec:logarithmic}. 
Finally we derive exact asymptotics for $\pr(A_\tau>x)$ and thus prove Theorem~\ref{thm:exact.asymptotics} in Section~\ref{sec:asymp.beta.12}.

\section{Proof of Proposition~\ref{prop:joint}}\label{sec:area.via.maximum}
Before giving the proof we will collect some known results that we will need in this and the following Sections. 
We will require the following statement, the first part  of which follows from 
Theorem 2 in Foss, Palmowski and Zachary~\cite{FPZ} (see also~\cite{D2005} for a short proof), and the 
second part from~\cite[Theorem 3.2]{DS13}. 
\begin{proposition}\label{prop:ds13} 
    Let $\e[X_1]=-a$ and either  (a)
     $\overline{F}(x):=\mathbf{P}(X_1>x)=x^{-\alpha}L(x)$ with some $\alpha>1$ 
     or (b)  $\overline{F}(x)\sim x^{-\varkappa} e^{-g(x)}$, 
 where $g(x)$ is a monotone continuously differentiable function satisfying 
 $\frac{g(x)}{x^\beta}\downarrow$ for $\beta\in(0,1/2)$, 
            and $\mathbf{E}|X_1|^\varkappa<\infty$ for some 
            $\varkappa>1/(1-\beta)$  then
            for any fixed $k$, 
\begin{align}
    \label{eq:Sk-MK}
    \pr(M_k>y)&\sim \pr(S_k>y)\sim k\overline F(y), \quad y\to \infty\\
\label{Mtau-k}
\mathbf{P}\left(\max_{n\le\tau\wedge k}S_n>y\right)&\sim \mathbf{E}(\tau\wedge k)\overline{F}(y),\quad  y\to \infty\\
\label{Mtau}
\pr(M_\tau>y)&\sim\e\tau\bar{F}(y),\quad y\to\infty
\end{align}
and
\begin{equation}\label{eq:ds13}
        \pr(\tau>n)\sim \e[\tau]\overline F(an), \quad n\to \infty. 
\end{equation}
\end{proposition}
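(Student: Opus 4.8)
The statement is a compendium of facts from the literature, so my plan is to assemble them rather than reprove everything from scratch; I indicate the steps and where the references enter. First I would record the tail preliminaries common to cases (a) and (b): in both the right tail $\overline F$ is long-tailed and subexponential, and insensitive to bounded shifts, $\overline F(y-c)\sim\overline F(y)$ for every fixed $c$. In case (a) this is classical for regularly varying $\overline F$ with any index; in case (b) it follows by writing $g(y)-g(y-c)=\int_{y-c}^{y}g'(t)\,dt$ and using $g'(t)\le\gamma_0 g(t)/t$ together with $g(t)/t^{\beta}\downarrow$, which force $g'(t)\to0$. The moment hypotheses ($\mathbf E|X_1|<\infty$ in (a), $\mathbf E|X_1|^{\varkappa}<\infty$ in (b)) guarantee that $\mathbf E X_1=-a$ is finite and, in (b), that the left tail is thin enough for the refined truncation estimates of \cite{DS13}.

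Next I would treat the fixed-$k$ statements. The relation $\pr(S_k>y)\sim k\overline F(y)$ is the standard single-big-jump asymptotics for a sum of i.i.d.\ summands with subexponential right tail and finite mean. For the running maximum I would sandwich
\[
\pr(S_k>y)\le\pr(M_k>y)\le\pr\Bigl(\textstyle\sum_{i=1}^{k}X_i^{+}>y\Bigr),
\]
the right-hand side being the tail of a sum of $k$ i.i.d.\ nonnegative variables with the same tail $\overline F$, hence again $\sim k\overline F(y)$; this gives \eqref{eq:Sk-MK}. For \eqref{Mtau-k} I would decompose $\{\max_{n\le\tau\wedge k}S_n>y\}$ according to the position $i\in\{1,\dots,k\}$ of the large increment. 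Since $\{\tau\ge i\}=\{S_1>0,\dots,S_{i-1}>0\}$ is independent of $X_i$ and, on $\{\tau\ge i\}$, a large $X_i$ forces $\tau>i$ and $S_i=S_{i-1}+X_i$ into the window, the $i$-th contribution is to leading order $\mathbf E\bigl[\overline F(y-S_{i-1})\mathbf 1_{\{\tau\ge i\}}\bigr]\sim\pr(\tau\ge i)\,\overline F(y)$ by the insensitivity from the first step plus dominated convergence (Potter bounds in (a); truncation of $S_{i-1}$ as in \cite{DS13} in (b)). The events with two or more moderately large increments, or with none, contribute $o(\overline F(y))$ by subexponentiality; summing and using $\sum_{i=1}^{k}\pr(\tau\ge i)=\mathbf E(\tau\wedge k)$ yields \eqref{Mtau-k}.

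Relation \eqref{Mtau} is then Theorem~2 of \cite{FPZ} (with the short proof in \cite{D2005}), which I would simply invoke; alternatively it follows by letting $k\to\infty$ in \eqref{Mtau-k} once one has a uniform bound $\pr(M_\tau>y,\tau>k)\le\varepsilon_k\overline F(y)$ with $\varepsilon_k\to0$, and it is here that the negative drift and the moment hypotheses are used. Finally, \eqref{eq:ds13} is Theorem~3.2 of \cite{DS13} (equivalently Theorem~I of \cite{Doney89}) in case (a) with $\alpha>1$, and Theorem~3.2 of \cite{DS13} in case (b) under $\beta\in(0,1/2)$ and $\varkappa>1/(1-\beta)$; these I would quote verbatim.

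The delicate points are the uniformity in the horizon $k$ needed to pass from \eqref{Mtau-k} to \eqref{Mtau}, i.e.\ the control of $\pr(M_\tau>y,\tau>k)$ uniformly in $y$, and, in case (b), the fact that the localisation of the big jump is not just a matter of bounded shifts: the typical correction to the jump size is of order $\sqrt{k}$, so the crude ``insensitivity'' argument above is not directly enough and one must invoke the precise truncation and large-deviation estimates of \cite{DS13}. These, however, are exactly the statements that the proposition is quoting, so the work reduces to checking that the hypotheses of those theorems are met under (a) and (b).
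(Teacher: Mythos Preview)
Your plan is correct and ultimately lands on the same two citations as the paper, but the paper takes a shorter path. Rather than arguing \eqref{eq:Sk-MK} and \eqref{Mtau-k} by hand via the sandwich and the big-jump decomposition, the paper notes that all three maximum relations \eqref{eq:Sk-MK}, \eqref{Mtau-k}, \eqref{Mtau} are instances of Theorem~2 of \cite{FPZ} applied with the stopping times $k$, $\tau\wedge k$ and $\tau$ respectively, so the entire work for those three reduces to verifying that $F\in\mathcal S^*$ (the strong subexponential class, $\int_0^x\overline F(u)\overline F(x-u)\,du\sim 2\overline F(x)\int_0^\infty\overline F(u)\,du$). The paper then checks $\mathcal S^*$-membership in cases (a) and (b) using exactly the $g'(t)\le\beta g(t)/t$ estimate you already isolated. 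Your intermediate arguments are fine, but they are subsumed by the single FPZ citation; moreover, since FPZ requires $\mathcal S^*$ and not merely $\mathcal S$, you would in any case have to add the $\mathcal S^*$ verification before invoking it for \eqref{Mtau}, at which point you get the fixed-$k$ statements for free. For \eqref{eq:ds13} you and the paper agree: one checks that the hypotheses of \cite{DS13} are met and quotes the result.
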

\begin{proof}
    To prove~\eqref{eq:Sk-MK},~\eqref{Mtau-k} and~\eqref{Mtau}, by Theorem~2 of~\cite{FPZ} it is sufficient to show that (a) or (b) implies that 
    $F\in\mathcal S^*$, that is $\int_0^\infty \overline F(y)<\infty$ and 
    \[
        \int_0^x\overline F(y)\overline F(x-y)dy \sim 2\overline F(x) \int_0^\infty \overline F(y)dy , \quad x\to \infty. 
    \]
    The fact that (a) implies $F\in\mathcal S^*$ is well-known and follows immediately from the dominated confergence theorem, since $ \overline F(x)\sim \overline F(x-y)$ for all fixed $y$ and 
    \[
        \int_0^x \frac{\overline F(y)\overline F(x-y)}{\overline F(x) }dy  
        =2\int_0^{x/2} \frac{\overline F(y)\overline F(x-y)}{\overline F(x) }dy 
    \]
    and $\overline F(x-y)\le C\overline F(x)$ for some $C>0$ when $y\le x/2$.
    
    Now, assume that (b) holds and show that $F\in\mathcal S^*$. 
    Consider now 
    \begin{align*}
        2\int_0^{x/2} \frac{\overline F(y)\overline F(x-y)}{\overline F(x) }dy.
    \end{align*} 
 Uniformly in  $y\in [\ln x ,x/2]$ we have 
 \begin{align*}
\frac{\overline F(y)\overline F(x-y)}{\overline F(x) }
&\le C e^{g(x)-g(x-y)-g(y)} 
=C e^{\int_{x-y}^x g'(t)dt -g(y)} 
\le C e^{\beta \int_{x-y}^x \frac{g(t)}{t}dt -g(x-y)}  \\
&\le C e^{\beta y\frac{g(x-y)}{x-y} -g(x-y)} \le 
Ce^{(\beta-1)g(x-y)} \to 0, \quad x\to\infty,  
 \end{align*}
 and, therefore, 
    \begin{align*}
        2\int_{\ln x }^{x/2} \frac{\overline F(y)\overline F(x-y)}{\overline F(x) }dy\to 0.
    \end{align*} 
    Next for  $y\in [0, \ln x]$, 
\begin{align*}
    1&\le \frac{\overline F(x-y)}{\overline F(x)}\le 
            \frac{\overline F(x-\ln x)}{\overline F(x)}
            \sim e^{g(x)-g(x-\ln x)} = 
        e^{\int_{x-\ln x}^x g'(t)dt}\\ 
     &\le e^{\beta \int_{x-\ln x}^x \frac{g(t)}{t}dt}\le 
     e^{\beta \frac{g(x-\ln x)}{(x-\ln x)^\beta}\int_{x-\ln x}^x t^{\beta-1}dt}
     \le e^{C \frac{g(x-\ln x)}{(x-\ln x)^\beta}\frac{\ln x}{x^{1-\beta}}}\to 1,
\end{align*}    
which implies that $F\in\mathcal S^*$.

    The proof of~\eqref{eq:ds13} is  very similar and can be done by  straightforward verification that~\eqref{sc1} and~\eqref{sc2} imply that conditions of Theorem~3.1  (and hence of Theorem~3.2) of~\cite{DS13} hold.  
\end{proof}

Define 
\[
\sigma_y=\inf\lbrace n<\tau:S_n>y\rbrace.
\]
Then, for every $k\geq 1$,
\begin{align*}
\pr(\sigma_y=k\vert M_\tau>y)&=\frac{\pr(\sigma_y=k)}{\pr(M_\tau>y)}\\
&=\frac{\mathbf{P}\left(\max_{n\le\tau\wedge k}S_n>y\right)-\mathbf{P}\left(\max_{n\le\tau\wedge(k-1)}S_n>y\right)}{\mathbf{P}(M_\tau>y)}.
\end{align*}
It wollows from~\eqref{Mtau-k} and~\eqref{Mtau} that 
\begin{align}
\label{limprob1}
\nonumber
\lim_{y\rightarrow\infty}\pr(\sigma_y=k\vert M_\tau>y)
&=\frac{\mathbf{E}\tau\wedge k-\mathbf{E}\tau\wedge(k-1)}{\e\tau}\\
&=\frac{\mathbf{P}(\tau>k-1)}{\e\tau}=: q_k, \hspace{0,5cm}k\geq 1.
\end{align}
It is clear that
\begin{align*}
\sum_{k=1}^\infty q_k=\frac{1}{\e\tau}\sum_{k=0}^\infty\pr(\tau>k-1)=1.
\end{align*}

For every fixed $N\geq1$ we have
\begin{equation}\label{ThS1}
\begin{split}
\pr(A_\tau&>x, M_\tau>y)\\
&=\sum_{k=1}^N\pr(A_\tau>x, \sigma_y=k, M_\tau>y)+\pr(A_\tau>x, \sigma_y>N, M_\tau>y).
\end{split}
\end{equation}
For the last term on the right hand side we have
\begin{align*}
\pr(A_\tau>x, \sigma_y>N, M_\tau>y)&\leq\pr(\sigma_y>N, M_\tau>y)\\
&=\pr(M_\tau>y)\pr(\sigma_y>N\vert M_\tau>y).
\end{align*}
It follows from~\eqref{limprob1} that  $\pr(\sigma_y>N\vert M_\tau>y)\rightarrow\sum_{j=N+1}^\infty q_j$, as $y\rightarrow\infty$. Then, using \eqref{Mtau}, we get
\begin{align}\label{ataumtau}
\pr(A_\tau>x,\sigma_y>N, M_\tau>y)\leq \varepsilon_N\bar{F}(y),
\end{align}
where $\varepsilon_N\rightarrow 0$ as $N\rightarrow\infty$.

For every fixed $k$ we have
\begin{align*}
\pr(A_\tau>x, \sigma_y=k, M_\tau>y)=\pr(A_\tau>x, \sigma_y=k).
\end{align*}
Since $S_j\in(0,y)$ for all $j<k$, we obtain
\begin{align*}
\pr(A_\tau>x, \sigma_y=k)\leq\pr\left(\sum_{j=k}^{\tau-1}S_j>x-(k-1)y, \sigma_y=k\right)
\end{align*}
and
\begin{align*}
\pr(A_\tau>x, \sigma_y=k)\geq\pr\left(\sum_{j=k}^{\tau-1}S_j>x, \sigma_y=k\right).
\end{align*}
By the Markov property, for  every $z>0$,
\begin{align*}
\pr\left(\sum_{j=k}^{\tau-1}S_j>z, \sigma_y=k\right)=\int_y^\infty\pr(S_k\in dv, \sigma_y=k)\pr(A_\tau>z\vert S_0=v).
\end{align*}

Let $\varkappa\in(1/(1-\beta),2)$ if $\overline{F}$ satisfies the conditions of the part (b)
and let $\varkappa=1$ in the case when $\overline{F}$ is regularly varying. Fix some $\delta>0$
and consider the set 
\[
B_v:=\left\{v-\delta v^{1/\varkappa}\le S_n+na\le v+\delta v^{1/\varkappa}
\mbox{ for all }n\le\frac{v+\delta v^{1/\varkappa}}{a}\right\}.
\]
Since $\e|X_1|^\varkappa<\infty$, it follows from the Marcinkiewicz-Zygmund Law of Large Numbers that
\begin{equation}
\label{refined_lln}
\mathbf{P}(B_v|S_0=v)\to 1\quad\text{as }v\to\infty.
\end{equation}
This implies that, as $y\rightarrow\infty$,
\begin{align*}
\pr&\left(\sum_{j=k}^{\tau-1}S_j>z, \sigma_y=k\right)\\
&=\int_y^\infty\pr\left(S_k\in dv,\sigma_y=k\right)\pr\left(\lbrace A_\tau>z\rbrace\cap B_v\vert S_0=v\right)+o\left(\pr(\sigma_y=k)\right).
\end{align*}
On the event $B_v$ one has
\[
\frac{(v-\delta v^{1/\varkappa})^2}{2a}\leq
A_\tau\leq\frac{(v+\delta v^{1/\varkappa})^2}{2a}.
\]
In other words,
\[
\pr\left(\lbrace A_\tau>z\rbrace\cap B_v\vert S_0=v\right)=\pr(B_v)
\quad\text{if}\quad v-\delta v^{1/\varkappa}\ge\sqrt{2az}
\]
and
\[
\pr\left(\lbrace A_\tau>z\rbrace\cap B_v\vert S_0=v\right)=0
\quad\text{if}\quad v+\delta v^{1/\varkappa}<\sqrt{2az}.
\]
Therefore, for all $v$ large enough,
\begin{align*}
\pr\left(\sum_{j=k}^{\tau-1}S_j>z, \sigma_y=k\right)&\leq\int_{\sqrt{2az}-\delta(2az)^{1/2\varkappa}}^\infty \pr(S_k\in dv, \sigma_y=k)+o(\pr(\sigma_y=k))\\
&=\pr\left(S_{\sigma_y}>\sqrt{2az}-\delta(2az)^{1/2\varkappa}, \sigma_y=k\right)+o(\pr(\sigma_y=k))
\end{align*}
and
\begin{align*}
\pr\left(\sum_{j=k}^{\tau-1}S_j>z, \sigma_y=k\right)&\leq\int_{\sqrt{2az}+2\delta(2az)^{1/2\varkappa}}^\infty \pr(S_k\in dv, \sigma_y=k)\mathbf{P}(B_v)+o(\pr(\sigma_y=k))\\
&=\pr\left(S_{\sigma_y}>\sqrt{2az}+2\delta(2az)^{1/2\varkappa}, \sigma_y=k\right)+o(\pr(\sigma_y=k)).
\end{align*}

\begin{lemma}
For every fixed $k$,
\begin{align*}
\sup_{v>y}\Bigg\vert\frac{\pr(S_k>v,\sigma_y=k)}{\overline{F}(v)}-\pr(\tau>k-1)\Bigg\vert\rightarrow 0\hspace{0,5cm}\text{as } y\rightarrow\infty
\end{align*}
\end{lemma}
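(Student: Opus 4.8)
The plan is to condition on the first $k-1$ steps of the walk, which reduces the claim to the tail behaviour of a sum ``large value $+$ an independent increment''; this is then controlled by the long-tailedness of $\overline F$ and by the membership $F\in\mathcal S^*$ established in the proof of Proposition~\ref{prop:ds13}. We may assume $k\ge2$, the case $k=1$ being immediate (then $\{S_1>v,\sigma_y=1\}=\{X_1>v\}$ and $\pr(\tau>0)=1$). For $v>y$ the event $\{S_k>v,\sigma_y=k\}$ coincides with $\{0<S_j\le y,\ 1\le j\le k-1\}\cap\{S_k>v\}$, since $S_k>v>y$ already forces $\sigma_y=k$. Writing $S_k=S_{k-1}+X_k$ with $X_k$ independent of $(X_1,\dots,X_{k-1})$ and conditioning on $S_{k-1}$,
\begin{equation*}
\pr(S_k>v,\sigma_y=k)=\int_{(0,y]}\overline F(v-s)\,\mu(ds),\qquad
\mu(\cdot):=\pr\bigl(0<S_j\le y\ \forall\, j<k,\ S_{k-1}\in\cdot\bigr),
\end{equation*}
where $\mu$ is a sub-probability measure on $(0,y]$ whose mass $\mu((0,y])=\pr(\tau>k-1,\ \max_{j<k}S_j\le y)$ increases to $\pr(\tau>k-1)$ as $y\to\infty$, and, by \eqref{eq:Sk-MK},
\begin{equation*}
\mu((t,y])\le\pr(t<S_{k-1}\le y)=(k-1+o(1))\bigl(\overline F(t)-\overline F(y)\bigr)\quad(t\to\infty),
\end{equation*}
uniformly in $y\ge t$ (up to an error $o(\overline F(t))$ which is absorbed below); note this bound is small precisely when $t$ is close to $y$.

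The lower bound is immediate: $\overline F$ is nonincreasing, so $\pr(S_k>v,\sigma_y=k)\ge\overline F(v)\mu((0,y])$, and $\mu((0,y])$ is independent of $v$ and converges to $\pr(\tau>k-1)$. For the upper bound, fix $h=h(y)\to\infty$ growing slowly enough (a small power of $y$ will do --- using \eqref{sc3} in the semi-exponential case and the uniform convergence theorem for slowly varying functions in the regularly varying case) that $\rho(y):=\sup_{v\ge y}\overline F(v-h(y))/\overline F(v)\to1$, and split the integral at $h$. On $(0,h]$ one has $\overline F(v-s)\le\overline F(v-h)\le\rho(y)\overline F(v)$, so $\int_{(0,h]}\overline F(v-s)\mu(ds)\le\rho(y)\overline F(v)\mu((0,y])=(1+o(1))\overline F(v)\pr(\tau>k-1)$ uniformly in $v>y$, which together with the lower bound pins the contribution of $(0,h]$ down to $o(\overline F(v))$. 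It remains to show that $\int_{(h,y]}\overline F(v-s)\mu(ds)=o(\overline F(v))$ uniformly in $v>y$. Integrating by parts in the Stieltjes sense, with $\bar\mu(s):=\mu((s,y])$,
\begin{equation*}
\int_{(h,y]}\overline F(v-s)\,\mu(ds)=\overline F(v-h)\,\bar\mu(h)+\int_{(h,y]}\bar\mu(s)\,d\bigl[\overline F(v-s)\bigr],
\end{equation*}
where $d[\overline F(v-s)]$ is a nonnegative measure since $s\mapsto\overline F(v-s)$ is nondecreasing. The first term is $\le(k-1+o(1))\overline F(v-h)\overline F(h)\le(k-1+o(1))\rho(y)\overline F(h)\overline F(v)=o(\overline F(v))$ uniformly; in the second, inserting $\bar\mu(s)\le(k-1+o(1))(\overline F(s)-\overline F(y))$, writing $\overline F(s)-\overline F(y)=\int_{(s,y]}dF(u)$, and using Fubini,
\begin{equation*}
\int_{(h,y]}\bigl(\overline F(s)-\overline F(y)\bigr)\,d\bigl[\overline F(v-s)\bigr]
=\int_{(h,y]}\bigl(\overline F(v-u)-\overline F(v-h)\bigr)\,dF(u)\le\int_{(h,y]}\overline F(v-u)\,dF(u).
\end{equation*}

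Everything thus reduces to showing $\sup_{v>y}\overline F(v)^{-1}\int_{(h(y),y]}\overline F(v-u)\,dF(u)\to0$. Split $(h,y]$ at an intermediate level $m=m(y)$ with $h(y)\ll m(y)\ll y$, again an insensitivity scale. On $(h,m]$, $\int_{(h,m]}\overline F(v-u)\,dF(u)\le\overline F(v-m)\pr(h<X_1\le m)\le(1+o(1))\overline F(v)\overline F(h)=o(\overline F(v))$. For $u\in(m,y]$ with $v-u\le m$ (possible only when $y\ge v-m$) one has $\overline F(v-u)\le1$ and the relevant $dF$-mass is at most $\overline F(v-m)-\overline F(y)\le\overline F(v-m)-\overline F(v)=o(\overline F(v))$ uniformly; for $u\in(m,y]$ with $v-u>m$ the relevant integral is bounded by $\pr(X_1+X_1'>v,\ X_1>m,\ X_1'>m)$ (with $X_1'$ an independent copy of $X_1$), and this is $o(\overline F(v))$ uniformly in $v$ as $m\to\infty$ by the ``two big jumps are negligible'' property of the class under consideration --- precisely the $\mathcal S^*$-type estimate performed in the proof of Proposition~\ref{prop:ds13} (the bound there on $\int_{\ln x}^{x/2}\overline F(y)\overline F(x-y)/\overline F(x)\,dy$), now with $dF(u)$ in place of $\overline F(u)\,du$. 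Combining all the estimates proves the lemma. The delicate point throughout is uniformity over all $v>y$, in particular over $v$ close to $y$, where $\overline F(v-s)/\overline F(v)$ is unbounded; there the crude bound $\mu((t,y])\le C\overline F(t)$ is useless and one genuinely needs the refinement $\mu((t,y])\le(k-1+o(1))(\overline F(t)-\overline F(y))$, small exactly in the range $t\approx y$, together with the $\mathcal S^*$ cancellation handling $\int\overline F(v-u)\,dF(u)$.
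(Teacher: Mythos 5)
Your proof is correct, but it takes a genuinely different route from the paper's. You condition on $S_{k-1}$, write $\pr(S_k>v,\sigma_y=k)=\int_{(0,y]}\overline F(v-s)\,\mu(ds)$, and control this convolution by splitting at an insensitivity scale $h(y)$, integrating by parts, and invoking the ``no two big jumps'' estimate $\pr(X_1+X_2>v,\,X_1>m,\,X_2>m)=o(\overline F(v))$ --- a standard consequence of subexponentiality, available here since $F\in\mathcal S^*\subset\mathcal S$. The paper instead argues at the level of events: it introduces $D_{k,N}=\cup_{j\le k}\{X_j>v+kN,\ |X_l|\le N\ \forall l\ne j\}$, observes that on $\{\sigma_y=k\}$ only the big jump at time $k$ can occur (a jump at $j<k$ with all other increments bounded by $N$ would force $S_j>y$), computes $\pr(D_{k,N},\sigma_y=k)=\pr(\tau>k-1,|X_l|\le N\ \forall l<k)\,\overline F(v+kN)$ exactly by independence, and kills the remainder $\pr(S_k>v,D_{k,N}^c)$ by comparison with the known asymptotics $\pr(S_k>v)\sim k\overline F(v)$ from \eqref{eq:Sk-MK}; letting $N\to\infty$ finishes. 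The paper's argument is shorter and delegates all subexponential machinery to the single input \eqref{eq:Sk-MK}, whereas yours re-derives the needed convolution estimates by hand and therefore must fight for uniformity when $v$ is close to $y$ --- which you correctly identify as the delicate point and handle via the refinement $\bar\mu(t)\le(k-1+o(1))(\overline F(t)-\overline F(y))$. Both proofs ultimately rest on $\pr(S_{k-1}>t)\sim(k-1)\overline F(t)$; yours additionally needs the existence of the scales $h(y)\ll m(y)\ll y$ and the two-big-jumps bound (with truncation level $m(y)$ chosen so the bound holds uniformly over all $v>y$, via the usual diagonal argument), both standard for classes (a) and (b) but not isolated as lemmas in the paper, and worth spelling out if you keep this route.
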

\begin{proof}
Fix some $N>0$ and define the events
\[
D_{k,N}=\cup_{j=1}^k\left\lbrace X_j>v+kN, \vert X_l\vert \leq N\hspace{0,3cm} \text{for all } l\neq j, l\le k\right\rbrace.
\]
It is clear that $D_{k,N}\subseteq \lbrace S_k>v\rbrace.$ Therefore,
\begin{align*}
\pr(S_k>v,\sigma_y=k)&=\pr(D_{k,N},\sigma_y=k)+\pr(S_k>v, D^c_{k,N},\sigma_y=k)\\
&=\pr(X_k>v+kN,\vert X_l\vert\leq N, \text{for all } l<k, \sigma_y>k-1)\\
&\hspace{1cm}+\pr(S_k>v, D^c_{k,N},\sigma_y=k).
\end{align*}
For the first term we have $(y>(k-1)N)$
\begin{equation}\label{A}
\begin{split}
\pr&(X_k>v+kN,\vert X_l\vert\leq N, \text{for all } l>k, \sigma_y>k-1)\\
&=\pr(\tau>k-1,\vert X_l\vert\leq N, l<k)\overline{F}(v+kN)\\
&=\pr(\tau>k-1)\overline{F}(v)-\varepsilon_N^{(1)}\overline{F}(v)+o(\overline{F}(v)),\hspace{0,3cm} \text{uniformly in } v>y,
\end{split}
\end{equation}
where
\[
\varepsilon_N^{(1)}:=
\pr(\tau>k-1,\vert X_l\vert> N \text{ for some }l<k)\to0
\quad N\to\infty.
\]

Furthermore,
\begin{equation}\label{B}
\begin{split}
\pr&(S_k>v, D_{k,N}^c,\sigma_y=k)\leq\pr(S_k>v, D_{k,N}^c)=\pr(S_k>v)-\pr(D_{k,N})\\
&=\pr(S_k>v)-k\pr(X_1>v+kN)(\pr(\vert X_1\vert\leq N))^{k-1}\\
&=\varepsilon_N^{(2)}\overline{F}(v)+o(\overline{F}(v)),
\end{split}
\end{equation}
where
\[
\varepsilon_N^{(2)}:=
k\left(1-(\pr(\vert X_1\vert\leq N))^{k-1}\right)\to0,
\quad N\to\infty.
\]

Combining~\eqref{A} and~\eqref{B} and letting $N\rightarrow\infty$ we set the desired relation.
\end{proof}
Since with the previous lemma
\[
\pr(S_{\sigma_y}>v,\sigma_y=k)\sim\bar{F}(v)\pr(\tau>k-1),\hspace{0,5cm} v,y\rightarrow\infty
\]
for $v\geq y$, we infer that
\begin{align*}
&\pr\left(\sum_{j=k}^{\tau-1}S_j>z, \sigma_y=k\right)\leq\bar{F}\left(\sqrt{2az}-\delta(2az)^{1/2\varkappa}\right)(\pr(\tau>k-1)+o(1))\\
&\hspace{4cm}+o(\mathbf{P}(\sigma_y=k))
\end{align*}
and
\begin{align*}
&\pr\left(\sum_{j=k}^{\tau-1}S_j>z, \sigma_y=k\right)\geq\overline{F}\left(\sqrt{2az}+2\delta(2az)^{1/2\varkappa}\right)(\pr(\tau>k-1)+o(1))\\
&\hspace{4cm}+o(\mathbf{P}(\sigma_y=k)).
\end{align*}
Under our assumptions on $\overline{F}$ one has
\[
\lim_{\delta\to0}\lim_{z\to\infty}\frac{\overline{F}\left(\sqrt{2az}+2\delta(2az)^{1/2\varkappa}\right)}
{\overline{F}\left(\sqrt{2az}-\delta(2az)^{1/2\varkappa}\right)}=1.
\]
Therefore,
\begin{align*}
&\pr\left(\sum_{j=k}^{\tau-1}S_j>z, \sigma_y=k\right)
=\overline{F}\left(\sqrt{2az}\right)(\pr(\tau>k-1)+o(1))+o(\mathbf{P}(\sigma_y=k)).
\end{align*}
Consequently,

\begin{align*}
\pr(A_\tau>x,\sigma_y=k)=\bar{F}(\sqrt{2ax})\pr(\tau>k-1)+o\left(\pr(\sigma_y=k)\right).
\end{align*}
Combining~\eqref{Mtau} and~\eqref{limprob1}, one gets
\[
\mathbf{P}(\sigma_y=k)\sim q_k\e\tau\overline{F}(y).
\]
Therefore, 
\begin{align*}
\pr(A_\tau>x,\sigma_y=k)=\bar{F}(\sqrt{2ax})(\pr(\tau>k-1)+o(1))+o(\overline{F}(y)).
\end{align*}
Consequently,
\begin{align}\label{sumpr}
\nonumber
&\sum_{k=1}^N\pr(A_\tau>x, \sigma_y=k, M_\tau>y)\\
&\hspace{1cm}=(\overline{F}(\sqrt{2ax})+o(1))\sum_{k=1}^N\pr(\tau>k-1)
+o(\overline{F}(y)).
\end{align}
Plugging~\eqref{ataumtau} and~\eqref{sumpr} into~\eqref{ThS1} and letting $N\rightarrow\infty$, we obtain
\begin{align*}
\pr(A_\tau>x, M_\tau>y)=(\e\tau+o(1))\bar{F}(\sqrt{2ax})+o(\overline{F}(y)).
\end{align*}
Thus, it remains to show that $\overline{F}(y)=O(\overline{F}(\sqrt{2ax}))$. This is obvious for regularly
varying tails and $y\ge \varepsilon\sqrt{x}$. 

Assume now that $\overline{F}$ satisfies the conditions of part (b).
To simplify notation put $y_*=\sqrt{2ax}-\frac{R\sqrt{2ax}}{g(\sqrt{2ax})}$. 
Then,
\[
 1\le \frac{\overline F(y_*)}{\overline F(\sqrt{2ax})}
 \le (1+o(1))e^{g(\sqrt{2ax})-g\left(y_*\right)}.
\]
Since $\frac{g(x)}{x^\beta}$ is monotone decreasing and $g$ is differentiable 
then clearly 
\[
 g'(x)\le \beta \frac{g(x)}{x}.
\]
Then,
\begin{align*}
 g(\sqrt{2ax})-g\left(y_*\right) 
 &=\int^{\sqrt{2ax}}_{y_*} g'(t) dt 
 \le \beta \int^{\sqrt{2ax}}_{y_*} \frac{g(t)}{t} dt 
 \le \beta \frac{g(y_*)}{(y_*)^\beta}\int^{\sqrt{2ax}}_{y_*} \frac{dt}{t^{1-\beta}} \\ 
 &=\frac{g(y_*)}{(y_*)^\beta}((2ax)^{\beta/2}-(y_*)^\beta)
\le \frac{g(y_*)}{(y_*)^{\beta}}\frac{\beta}{(y_*)^{1-\beta}}
C\frac{\sqrt{2ax}}{g(\sqrt{2ax})}\\
&\le \beta C \frac{\sqrt{2ax}}{y_*}\le (1+o(1))\beta C.
\end{align*}
Therefore, 
\begin{align*}
\overline{F}(y)\leq C\overline{F}(x),\hspace{0,5cm}\forall y\in \left[\sqrt{2ax}-\frac{R\sqrt{2ax}}{g(\sqrt{2ax})},\sqrt{2ax}\right].
\end{align*}

\section{Proof of Theorem~\ref{cor:logarithmic.upper.bound}}
\label{sec:logarithmic}
We start by proving an exponential estimate for the area $A_n$ when random variables $X_j$ are truncated. Let 
\[
    \overline X_n=\max(X_1,\ldots,X_n).
\]
The next result is our main technical tool to investigate trajectories without big jumps.
\begin{lemma}\label{lem:chebyshev}
    Let $\e[X_1]=-a$ and $\sigma^2:=\mathbf{Var}(X_1)<\infty$. 
    Assume that the distribution function $F$ of $X_j$ satisfies~\eqref{sc1} and that~\eqref{sc2} holds with $\gamma_0=1$. 
    Then, there exists a constant $C>0$ such that  
    \[
        \pr(A_n>x,\overline X_n\le y) \le \exp\left\{
                            -\lambda \frac{x}{n}  - \lambda\frac{a n}{2}   +C\lambda^2 n 
                    \right\},
    \]
    where $\lambda = \frac{g(y)}{y}$.
\end{lemma}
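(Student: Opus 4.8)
The plan is to run the exponential (Cramér--Chebyshev) bound for the linear functional $A_n$ of the increments, after cutting the increments off at level $y$. First I would rewrite $A_n=\sum_{k=0}^{n-1}S_k=\sum_{j=1}^{n-1}(n-j)X_j$ and introduce the truncated variables $\widehat X_j:=X_j\wedge y$ and $\widehat A_n:=\sum_{j=1}^{n-1}(n-j)\widehat X_j$. On the event $\{\overline X_n\le y\}$ every $X_j$ with $j\le n$ coincides with $\widehat X_j$, so $A_n=\widehat A_n$ there and hence, for any $\mu>0$, $\mathbf 1\{\overline X_n\le y\}e^{\mu A_n}=\mathbf 1\{\overline X_n\le y\}e^{\mu\widehat A_n}\le e^{\mu\widehat A_n}$. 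Markov's inequality and independence of the $\widehat X_j$ then give
\[
\mathbf P(A_n>x,\overline X_n\le y)\le e^{-\mu x}\,\mathbf E e^{\mu\widehat A_n}
=e^{-\mu x}\prod_{j=1}^{n-1}\mathbf E\!\left[e^{\mu(n-j)\widehat X_1}\right],
\]
and I would choose $\mu=\lambda/n$ with $\lambda=g(y)/y$, so that every exponent $\theta_j:=\mu(n-j)$ lies in $(0,\lambda)$; in particular $\theta_j y<\lambda y=g(y)$, which is the inequality that makes the truncation effective.

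The heart of the matter is a single-increment estimate: for some constant $C$ independent of $y$ and of $\theta$, and for all large $y$ and all $\theta\in(0,g(y)/y]$,
\[
\mathbf E\!\left[e^{\theta(X_1\wedge y)}\right]\le\exp\{-a\theta+C\theta^2\}.
\]
For the linear term I would use $\mathbf E[X_1\wedge y]=\mathbf E X_1-\mathbf E[(X_1-y)^+]\le-a$. For the quadratic term I would apply $e^u\le 1+u+\tfrac12 u^2 e^{u^+}$ with $u=\theta(X_1\wedge y)$, reducing matters to a bound on $\mathbf E\big[(X_1\wedge y)^2 e^{\theta(X_1\wedge y)}\big]$, and split this expectation over $\{X_1\le0\}$, $\{0<X_1\le y\}$, $\{X_1>y\}$. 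The first piece is at most $\mathbf E X_1^2$; the third equals $y^2 e^{\theta y}\overline F(y)$, which is bounded by $e^{g(y)}\cdot y^2\cdot Ce^{-g(y)}/y^2=C$ thanks to \eqref{sc1} and $\theta y\le g(y)$; for the middle piece I would use the tail bound $\overline F(t)\le 2e^{-g(t)}t^{-2}$ valid for large $t$, the elementary inequality $\theta t\le g(t)$ for $t\le y$ — which holds because the monotonicity $g(t)/t\downarrow$ (i.e.\ \eqref{sc2} with $\gamma_0=1$) gives $g(t)/t\ge g(y)/y\ge\theta$ — together with the decay of $e^{\theta t-g(t)}$ for $t$ bounded away from $y$, which follows from $g'(y)\le g(y)/y$ (cf.\ \eqref{sc3}); finiteness of the variance controls the part of the integral near the origin.

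Multiplying the single-increment bounds then yields
\[
\mathbf P(A_n>x,\overline X_n\le y)\le\exp\left\{-\mu x-a\sum_{j=1}^{n-1}\theta_j+C\sum_{j=1}^{n-1}\theta_j^2\right\},
\]
and with $\mu=\lambda/n$ one has $\mu x=\lambda x/n$, $\sum_{j=1}^{n-1}\theta_j=\frac{\lambda}{n}\sum_{i=1}^{n-1}i=\frac{\lambda(n-1)}{2}=\frac{\lambda n}{2}+O(\lambda)$ and $\sum_{j=1}^{n-1}\theta_j^2=\frac{\lambda^2}{n^2}\sum_{i=1}^{n-1}i^2\le\frac{\lambda^2 n}{3}$; since $\lambda=g(y)/y$ is bounded, the $O(\lambda)$ term is absorbed into the constant (or into $C\lambda^2 n$), giving the asserted bound $\exp\{-\lambda x/n-\lambda a n/2+C\lambda^2 n\}$.

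The step I expect to be the main obstacle is precisely the single-increment exponential-moment estimate, and within it the control of $\mathbf E\big[(X_1\wedge y)^2 e^{\theta(X_1\wedge y)};\,X_1>0\big]$ when $\theta$ is close to the critical value $g(y)/y$: this is exactly where the sharp tail asymptotics \eqref{sc1} and the monotonicity of $g(t)/t$ have to be combined, and it is the reason the hypothesis here is $\gamma_0=1$ (equivalently, $g(t)/t$ decreasing) rather than merely $\gamma_0<1$.
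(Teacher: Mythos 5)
Your overall architecture is the same as the paper's (exponential Chebyshev inequality for the weighted sum $\sum_j(n-j)X_j$ after truncation, a product of single‑increment exponential‑moment bounds, then summing the exponents), but the single‑increment estimate — which you correctly identify as the crux — has a genuine gap. You propose to prove $\e\bigl[e^{\theta(X_1\wedge y)}\bigr]\le e^{-a\theta+C\theta^2}$ via $e^u\le 1+u+\tfrac12u^2e^{u^+}$, which forces you to show $\e\bigl[(X_1\wedge y)^2e^{\theta(X_1\wedge y)};\,X_1>0\bigr]\le C$ uniformly in $y$ with $\theta\le g(y)/y$. This is false in general under the hypotheses of the lemma. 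The reason is that the weight $t^2$ exactly cancels the $t^{-2}$ in \eqref{sc1}: after an integration by parts the middle piece is controlled by $\theta\int^y e^{\theta t-g(t)}\,dt$ plus harmless terms, and when $\gamma_0=1$ the exponent $\theta t-g(t)=t\bigl(\tfrac{g(y)}{y}-\tfrac{g(t)}{t}\bigr)$ need not decay at all near $t=y$ (your appeal to ``decay for $t$ bounded away from $y$'' does not touch this region). Concretely, for $g(t)=t/\log t$ one has $\theta-g'(y)\asymp 1/\log^2 y$, so $e^{\theta t-g(t)}\ge c$ on an interval of length $\asymp\log^2y$ below $y$, and $\e\bigl[(X_1\wedge y)^2e^{\theta(X_1\wedge y)};X_1>0\bigr]\gtrsim \log y\to\infty$; your method then yields only $1-a\theta+O(\theta^2\log y)$, i.e.\ a final bound $\exp\{-\lambda x/n-\lambda an/2+C\lambda^2 n\log y\}$, weaker than claimed. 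The paper avoids this by splitting the truncated moment generating function at $1/\theta$ rather than using a global quadratic bound: on $\{X\le 1/\theta\}$ it uses $e^u\le1+u+u^2$ (only finite variance needed), and on $\{1/\theta<X\le y\}$ it integrates by parts \emph{before} inserting \eqref{sc1}, so the surviving integrand is $\theta e^{\theta t-g(t)}t^{-2}\le\theta t^{-2}$, whose integral over $(1/\theta,y]$ is at most $\theta^2$. You should replace your middle‑piece argument by this splitting (or an equivalent device that keeps the $t^{-2}$).

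A secondary, fixable point: you take $A_n=\sum_{j=1}^{n-1}(n-j)X_j$, so your linear term is $-a\lambda(n-1)/2=-a\lambda n/2+a\lambda/2$, and the surplus $a\lambda/2$ cannot be absorbed into $C\lambda^2 n$ uniformly in $n$ (take $n=1$ and $\lambda\to0$). The paper works with the weights $(n-j+1)$, $j\le n$ (i.e.\ it includes the term $S_n$, consistent with how the lemma is applied on $\{\tau=n+1\}$), for which $\sum_j(n-j+1)=n(n+1)/2\ge n^2/2$ and the stated $-\lambda an/2$ comes out cleanly; you should either adopt that convention or restate your bound with $(n-1)/2$.
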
    
\begin{proof}
We will prove this lemma by using the exponential Chebyshev inequality. 
For that we need to obtain estimates for the moment generating function of 
$A_n$. First,
\[
    \e\left[ e^{\frac{\lambda}{n}A_n};\overline X_n\le y\right]=
    \e \left[e^{\frac{\lambda}{n}\sum_1^n(n-j+1)X_j};\overline X_n\le y\right]
    =\prod_{j=1}^n\varphi_y\left(\lambda_{n,j}\right),
\]
where 
\[
    \varphi_y(t) := \e[e^{t X_j};X_j\le y]
\]
and 
\[
    \lambda_{n,j} := \lambda\frac{(n-j+1)}{n}.
\]
Then,
\begin{align*}
    \varphi_y(\lambda_{n,j})&= \e[e^{\lambda_{n,j}X_j}; X_j\le 1/\lambda_{n,j}]+
    \e[e^{\lambda_{n,j}X_j};1/\lambda_{n,j} < X_j\le y]\\
                            &=: E_1+E_2.
\end{align*}    
Using the elementary bound $e^x\le 1+x+x^2$ for $x\le 1$ we obtain, 
\begin{align*}
    E_1\le 1+\lambda_{n,j}\e[X_j]
    +\lambda_{n,j}^2 \e[X_j^2]
    =1-a\lambda_{n,j}+(a^2+\sigma^2)\lambda_{n,j}^2. 
\end{align*}    
Next, using the integration by parts and the assumption~\eqref{sc1}, 
\begin{align*}
    E_2&=\int_{1/\lambda_{n,j}}^{y} e^{\lambda_{n,j}t}dF(t) 
            =-\overline F(t)e^{\lambda_{n,j}t}\biggl |_{t=1/\lambda_{n,j}}^{t=y}
            +\lambda_{n,j}\int_{1/\lambda_{n,j}}^{y} e^{\lambda_{n,j}t}\overline F(t) dt\\     
       &\le e \overline F(1/\lambda_{n,j})+C \lambda_{n,j}\int_{1/\lambda_{n,j}}^{y} e^{\lambda_{n,j}t-g(t)}t^{-2}dt. 
\end{align*}    
Now note that for $t\le y$, 
\[
    \lambda_{n,j} t - g(t) = t \left(\lambda_{n,j}-\frac{g(t)}{t}\right)
    \le t \left(\lambda_{n,j}-\frac{g(y)}{y}\right),
\]
due to the condition~\eqref{sc2}. Then, 
\[
    \lambda_{n,j}-\frac{g(y)}{y} \le 
    \lambda - \frac{g(y)}{y}=0
\] 
and, therefore, 
\[
    E_2\le e \overline F(1/\lambda_{n,j})+C \lambda_{n,j}\int_{1/\lambda_{n,j}}^{y} t^{-2}dt \le (C+e)\lambda_{n,j}^2, 
\]
where we also used the Chebyshev inequality. 
As a result, for some constant $C$, 
\[
    \varphi_y(t)= E_1+E_2\le 1-a\lambda_{n,j} +C\lambda_{n,j}^2.
\]
Consequently, 
\begin{align*}
    \e\left[e^{\frac{\lambda}{n}A_n};\overline X_n\le y\right]&\le \prod_{j=1}^n \left(1-a\lambda_{n,j} +C\lambda_{n,j}^2\right) \\
    &= 
    \exp\left\{
        \sum_{j=1}^n\ln \left(1-a\lambda_{n,j} +C\lambda_{n,j}^2\right)  
    \right\}\\
                               &\le 
                               \exp\left\{
                                    \sum_{j=1}^n\left(-a\lambda_{n,j} +C\lambda_{n,j}^2\right)  
                                \right\}\\
                               &=\exp\left\{
                                        \sum_{j=1}^n\left(-a\lambda\frac{n-j+1}{n} +C\left(\lambda\frac{n-j+1}{n} \right)^2\right)  
                                \right\}\\
                               &\le \exp\left\{
                                   -\frac{a \lambda}{2} n +C\lambda^2 n 
                               \right\}.
\end{align*}    
Finally,
\[
    \pr(A_n>x,\overline X_n\le y)\le e^{-\lambda\frac{x}{n}}\e\left[e^{\frac{\lambda}{n}A_n};\overline X_n\le y\right] 
    \le 
                \exp\left\{
                            -\lambda \frac{x}{n}  -\frac{a \lambda}{2}   n +C\lambda^2 n 
                    \right\}.
\]

\end{proof}
We can now obtain a rough upper bound using the exponential bound in Lemma~\ref{lem:chebyshev}.
\begin{lemma}\label{lem:logarithmic.upper.bound}
    Let $\e[X_1]=-a<0$ and $\mathbf{Var}(X_1)<\infty$. 
    Assume that the distribution function $F$ of $X_j$ satisfies~\eqref{sc1} and that~\eqref{sc2} holds with $\gamma_0=1$.
    Then, there exists a constant $C>0$ such that 
    \[
        \pr(A_\tau>x)\le Cx^{1/4} \exp
                \left\{
                    -g(\sqrt{2ax})\sqrt{
                                1-\frac{2Cg(\sqrt{2ax})}{a\sqrt{2ax}}
                            }
                \right\}
    \]
\end{lemma}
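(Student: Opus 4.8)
The plan is to isolate the contribution of a single large jump and to handle the remaining trajectories with the exponential estimate of Lemma~\ref{lem:chebyshev}. I would fix a threshold $y=y(x)>0$, to be taken equal to $\sqrt{2ax}$ at the end, write $\overline X_\tau:=\max_{1\le j\le\tau}X_j$, and split
\[
 \pr(A_\tau>x)\le \pr(A_\tau>x,\,\overline X_\tau\le y)+\pr(\overline X_\tau>y).
\]
Because $\{\tau\ge j\}=\{S_1>0,\dots,S_{j-1}>0\}$ depends only on $X_1,\dots,X_{j-1}$ and is therefore independent of $X_j$, summing over $j$ gives $\e\,\#\{j\le\tau:X_j>y\}=\e\tau\,\overline F(y)$, whence $\pr(\overline X_\tau>y)\le\e\tau\,\overline F(y)$ by Markov's inequality (here $\e\tau<\infty$ since the drift is negative). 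For $y=\sqrt{2ax}$ this term is $\e\tau\,\overline F(\sqrt{2ax})$, which by~\eqref{sc1} equals $e^{-g(\sqrt{2ax})}$ times a polynomial factor and is therefore dominated by the right-hand side of the bound to be proved, so it disappears into the constant $C$.

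For the first term I would decompose over the value of $\tau$. On $\{\tau=n\}$ the last increment $X_n=S_n-S_{n-1}$ is non-positive, so $\overline X_\tau=\overline X_{n-1}:=\max_{1\le j\le n-1}X_j$ there, while $A_\tau=\sum_{k=0}^{n-1}S_k$ coincides with the quantity $A_{n-1}$ of Lemma~\ref{lem:chebyshev} (the term $n=1$ is empty since then $A_\tau=0$). Dropping the event $\{\tau=n\}$ after the decomposition,
\[
 \pr(A_\tau>x,\,\overline X_\tau\le y)
 =\sum_{n\ge2}\pr(A_{n-1}>x,\,\tau=n,\,\overline X_{n-1}\le y)
 \le\sum_{m\ge1}\pr(A_m>x,\,\overline X_m\le y).
\]
Lemma~\ref{lem:chebyshev}, applied with $\lambda=g(y)/y$ (admissible once $y$ is large enough that $\lambda$ is small), bounds the $m$-th summand by $\exp\{-\lambda x/m-\frac{1}{2}\lambda a m+C\lambda^2 m\}$; the negative-drift term $-\frac{1}{2}\lambda a m$ makes the series geometrically convergent (for $x$ large, $\lambda<a/(2C)$), so it is enough to estimate it through its largest terms.

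There remains the evaluation of $\sum_{m\ge1}\exp\{h(m)\}$ with $h(m)=-\lambda x/m-bm$ and $b=\lambda(\frac{a}{2}-C\lambda)>0$. The maximum of $h$ is attained at $m^*=\sqrt{\lambda x/b}=\sqrt{2x/a}\,(1+o(1))$, where
\[
 h(m^*)=-2\sqrt{\lambda x b}=-\lambda\sqrt{2ax}\,\sqrt{1-\frac{2C\lambda}{a}},
\]
and the summands with $m$ much below $m^*$ (suppressed by $-\lambda x/m$) or much above it (suppressed by $-bm$) are negligible, so the sum is $e^{h(m^*)}$ times a polynomial factor set by the width of the peak at $m^*$. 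For $y=\sqrt{2ax}$ one has $\lambda\sqrt{2ax}=g(\sqrt{2ax})$ and $2C\lambda/a=2Cg(\sqrt{2ax})/(a\sqrt{2ax})$, which is exactly the exponent appearing in the statement; adding the smaller big-jump term $\e\tau\,\overline F(\sqrt{2ax})$ then yields the claimed inequality.

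The one genuinely delicate point is this Laplace-type estimate of $\sum_m e^{h(m)}$. One must choose the truncation level $y$ so that \emph{both} the truncated sum and the large-jump probability come out at the level $e^{-g(\sqrt{2ax})(1+o(1))}$, which pins $y$ down to $\sqrt{2ax}$; then locate the maximiser $m^*\asymp\sqrt{x/a}$, bound the two tails of the sum on either side of it, and keep honest track of the polynomial factor from the spread of the sum near $m^*$. One also has to check that the quadratic correction $C\lambda^2m$ in Lemma~\ref{lem:chebyshev} merely shifts the effective drift from $a/2$ to $a/2-C\lambda$, producing the factor $\sqrt{1-2C\lambda/a}$ in the exponent rather than destroying the estimate — which works because $\lambda=g(\sqrt{2ax})/\sqrt{2ax}\to0$, a consequence of~\eqref{sc2} with $\gamma_0=1$. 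The remaining ingredients, the big-jump bound and the reduction to Lemma~\ref{lem:chebyshev}, are routine.
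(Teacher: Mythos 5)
Your argument follows the paper's proof essentially step for step: the same split at the threshold $y=\sqrt{2ax}$, the same big-jump bound $\e\tau\,\overline F(\sqrt{2ax})$ for the second term, and the same application of Lemma~\ref{lem:chebyshev} with $\lambda=g(\sqrt{2ax})/\sqrt{2ax}$ to the truncated term. The only difference is in execution of the final step: where you sketch a Laplace-type estimate of $\sum_m e^{-\lambda x/m-bm}$ around its maximiser $m^*=\sqrt{\lambda x/b}$, the paper compares the sum with $\int_0^\infty e^{-\lambda x/t-\lambda I t}\,dt=2\sqrt{x/I}\,K_1\bigl(2\lambda\sqrt{Ix}\bigr)$ and invokes the asymptotics of the modified Bessel function $K_1$, which produces the same exponent $-2\lambda\sqrt{Ix}=-g(\sqrt{2ax})\sqrt{1-2C\lambda/a}$ and pins down the polynomial prefactor that you leave implicit.
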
 

\begin{proof}
 Clearly, 
 \[
     \pr(A_\tau>x)\le \pr(A_\tau>x, \overline X_\tau\le \sqrt{2ax})
     +\pr(A_\tau>x, \overline X_\tau>\sqrt{2ax})=:P_1+P_2. 
 \]
 First, using Lemma~\ref{lem:chebyshev} with $y=\sqrt{2ax}$ we obtain, 
 \begin{align*}
     P_1&\le \sum_{n=0}^\infty\mathbf{P}(A_n\geq x, \overline X_n \le \sqrt{2ax},\tau=n+1)\\ 
        &\le \sum_{n=1}^\infty
                    \exp\left\{
                        -\lambda \frac{x}{n}  -\frac{a \lambda}{2} n +C\lambda^2 n 
                    \right\}
                    =
                    \sum_{n=1}^\infty
                    \exp\left\{
                            -\lambda \frac{x}{n}  - \lambda I n 
                    \right\},
 \end{align*}    
 where $\lambda = \frac{g(\sqrt {2ax} )}{\sqrt {2ax}}$ and $I=\frac{a}{2}-C\lambda. $
With formula (25) at page 146 of Bateman~\cite{BTIT} we have,
\begin{align*}
\sum_{n=1}^\infty\exp\left\lbrace-\lambda\frac{x}{n}-\lambda I n\right\rbrace&\leq \int_0^\infty\exp\left\lbrace-\lambda \frac{x}{y}-\lambda I(y+1)\right\rbrace dy\\
&=e^{-\lambda I}\sqrt{\frac{4x}{I}}K_1(2\lambda\sqrt{Ix}).
\end{align*}
Now using the asymptotics for the modified Bessel function
\[
K_1(z)\sim \sqrt{\frac{\pi}{2z}}e^{-z}
\]
we obtain
\begin{align*}
\sum_{n=1}^\infty\exp\left\lbrace-\lambda\frac{x}{n}-\lambda I n\right\rbrace\leq Cx^{1/4}\exp\{-2\lambda\sqrt{Ix}\}.
\end{align*}
Therefore, 
\begin{align}\label{eq:p1}
    P_1&\le  Cx^{1/4}\exp\{-2\lambda\sqrt{Ix}\}\\
    \nonumber
    &\le Cx^{1/4} \exp
                \left\{
                    -g(\sqrt{2ax})\sqrt{
                                1-\frac{2Cg(\sqrt{2ax})}{a\sqrt{2ax}}
                            }
                \right\}.
\end{align}    
Next,
\begin{align*}
    P_2&\le \sum_{n=0}^\infty\mathbf{P}(A_\tau\geq x, M_n \le \sqrt{2ax}, X_{n+1}>\sqrt{2ax} ,\tau>n)\\
        &\le \sum_{n=0}^\infty\mathbf{P}( X_{n+1}>\sqrt{2ax}) \pr(\tau>n) \le 
        \e[\tau] \overline F(\sqrt {2a x}) = o(P_1).
\end{align*}    
Then, the claim follows. 
\end{proof}    
Now we will give a lower bound.
\begin{lemma}\label{lem:lower.bound}
 Let $\e[X_1]=-a<0$ and $\mathbf{Var}(X_1)<\infty$. 
 Then, for any $\varepsilon>0$ there exists $C>0$ such that, 
 \[
  \liminf_{x\to\infty}\frac{\pr(A_\tau>x)}{\overline F(\sqrt{2ax}+Cx^{1/4+\varepsilon})}\ge \e\tau. 
 \]
\end{lemma}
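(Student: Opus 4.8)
The plan is to establish the lower bound by the standard one-big-jump heuristic: force a large jump at the first step and then let the remaining walk evolve typically. More precisely, I would condition on $X_1 = v$ for $v$ slightly above $\sqrt{2ax}$, so that on a high-probability event the walk started from $v$ runs down along the line $v - an$ for roughly $v/a$ steps, producing an area close to $v^2/(2a)$. Choosing $v = \sqrt{2ax} + Cx^{1/4+\varepsilon}$ makes $v^2/(2a) = x + C\sqrt{2a}\,x^{3/4+\varepsilon} + o(x^{3/4+\varepsilon})$, which exceeds $x$ with room to spare, and the deviations of the area around $v^2/(2a)$ are of smaller order than this gap.

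First I would write, by the Markov property at time $1$,
\[
\pr(A_\tau > x) \ge \int_{(\sqrt{2ax}+Cx^{1/4+\varepsilon})}^{\infty} \pr(X_1 \in dv)\,\pr\bigl(A_\tau > x \mid S_0 = v\bigr),
\]
where under $\pr(\cdot \mid S_0 = v)$ the walk starts at $v$ and $\tau$ is its first passage below $0$. Next I would introduce, as in the proof of Proposition~\ref{prop:joint}, the event
\[
B_v := \Bigl\{ v - \delta v^{1/\varkappa} \le S_n + na \le v + \delta v^{1/\varkappa} \text{ for all } n \le \tfrac{v+\delta v^{1/\varkappa}}{a} \Bigr\}
\]
with $\varkappa = 2$ (legitimate since $\mathbf{E}X_1^2 < \infty$), for which the Marcinkiewicz--Zygmund law of large numbers gives $\pr(B_v \mid S_0 = v) \to 1$ as $v \to \infty$; here I would take $\delta$ fixed but arbitrarily small. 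On $B_v$ one has $A_\tau \ge (v - \delta v^{1/2})^2/(2a)$, and since $v - \delta v^{1/2} = v(1 - \delta v^{-1/2}) = v - \delta\sqrt{v}$, this lower bound equals $v^2/(2a) - \delta v^{3/2}/a + o(v^{3/2})$. With $v \ge \sqrt{2ax} + Cx^{1/4+\varepsilon}$ this is at least
\[
x + C\sqrt{2a}\,x^{3/4+\varepsilon}(1+o(1)) - \tfrac{\delta}{a}(2ax)^{3/4}(1+o(1)),
\]
and because $x^{3/4+\varepsilon}$ dominates $x^{3/4}$, the right-hand side exceeds $x$ for all large $x$ regardless of the fixed $\delta$. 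Hence $\pr(A_\tau > x \mid S_0 = v) \ge \pr(B_v \mid S_0 = v) \to 1$ uniformly for such $v$, and the integral above is bounded below by $(1+o(1))\overline F(\sqrt{2ax} + Cx^{1/4+\varepsilon})$.

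This argument yields constant $1$ rather than $\mathbf{E}\tau$, so the final ingredient is to recover the factor $\mathbf{E}\tau$: the big jump need not occur at the very first step but may occur at any step $k$ before the exit from the positive half-line, contributing $\pr(\tau > k-1)$ as in Proposition~\ref{prop:joint}, and $\sum_{k\ge 1}\pr(\tau>k-1) = \mathbf{E}\tau$. Concretely I would sum the one-big-jump contributions over $k = 1, \dots, N$: for each fixed $k$, conditioning on $\{\sigma_y = k\}$ with $y$ chosen of smaller order than $\sqrt{2ax}$ (so $S_{\sigma_y}$ is essentially the size of the big jump), the same $B_v$-argument shows the contribution is $(1+o(1))\pr(\tau>k-1)\overline F(\sqrt{2ax}+Cx^{1/4+\varepsilon})$; letting first $x\to\infty$, then $N\to\infty$, and finally noting $\sum_{k=1}^N \pr(\tau>k-1) \to \mathbf{E}\tau$ gives the claim. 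The main obstacle is bookkeeping the three parameters $\delta$, $N$, and the jump threshold so that the error terms (the $o(\pr(\sigma_y=k))$ terms and the deviation $\delta v^{3/2}$ in the area) are genuinely negligible against $\overline F(\sqrt{2ax}+Cx^{1/4+\varepsilon})$ uniformly; the key point making this work is precisely that the $x^{1/4+\varepsilon}$ correction in the argument of $\overline F$ is chosen large enough to absorb the $x^{3/4}$-order fluctuations of the area, which in turn requires the subexponential smoothness of $\overline F$ only through $\overline F(\sqrt{2ax}+Cx^{1/4+\varepsilon}) = \overline F(\sqrt{2ax})e^{-O(x^{1/4+\varepsilon}g'(\sqrt{2ax}))}$ being controlled — but since the statement keeps the shifted argument, no such smoothness is actually needed here and this lemma holds under the bare second-moment assumption.
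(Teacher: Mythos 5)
Your overall strategy -- a single big jump at some step $k\le N$ followed by a typical downward path, with the factor $\mathbf{E}\tau$ recovered by summing $\pr(\tau>k-1)$ over $k$ -- is exactly the paper's argument, and your observation that only the second-moment assumption is needed is correct. However, there is one step that fails as written: the use of the event $B_v$ with $\varkappa=2$. The Marcinkiewicz--Zygmund law of large numbers gives $(S_n+na)/n^{1/p}\to0$ a.s. only for $p<2$; at $p=2$ the normalization $n^{1/2}$ is precisely the CLT scale, so $\max_{n\le v/a}|S_n+na|$ is of order $\sqrt{v}$ and $\pr(B_v\mid S_0=v)$ converges to a constant strictly less than $1$ (a Brownian small-band probability, which moreover tends to $0$ as $\delta\to0$), not to $1$. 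Consequently your claimed bound $\pr(A_\tau>x\mid S_0=v)\ge\pr(B_v\mid S_0=v)\to1$ does not hold, and the argument as stated only yields a lower bound with an unwanted constant factor less than $\mathbf{E}\tau$.

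The repair is minor and brings you in line with what the paper actually does: replace the band of width $\delta v^{1/2}$ by one of width $R+l^{1/2+\varepsilon'}$ (equivalently, take $\varkappa<2$ strictly, say $\varkappa=2/(1+2\varepsilon')$, which is legitimate under $\mathbf{E}X_1^2<\infty$). Then for any $\delta>0$ one can choose $R$ so that the path stays in the band with probability at least $1-\delta$, and the resulting deficit in the area is $O(Rx^{1/2}+x^{3/4+\varepsilon'/2})$, which is still dominated by the surplus $c\,x^{3/4+\varepsilon}$ coming from the shift $Cx^{1/4+\varepsilon}$ in the jump threshold, provided $\varepsilon'<\varepsilon$ (or $\varepsilon'\le 2\varepsilon$ after bookkeeping). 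With this change, and letting $\delta\to0$ at the very end, your argument goes through and coincides with the paper's proof.
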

\begin{proof}
 Fix $N\ge 1$. 
 Put $y^+ =\sqrt{2ax}+Cx^{1/2-\varepsilon},$ 
 where $C$ will picked later. 
 Since $\e[X_1^2]<\infty$, by the Strong Law of Large Numbers,  
 \[
  \frac{S_{l}+al}{l^{1/2+\varepsilon}}\to 0, \quad l\to \infty    \mbox{ a.s.}
 \]
 Hence, for any $\delta>0$ we can pick $R>0$ such that 
\[
 \pr\left( \min_{l\le \sqrt{2x/a}} (S_{l}+al +R + l^{1/2+\varepsilon})>0\right)>(1-\delta)
\]
Now note that there exists a sufficiently large $C$ such that,
for every $k\le N$,
\[
 \left\{\min_{l\le \sqrt{2x/a}} (S_{k+l}-S_k+al +R + l^{1/2+\varepsilon})>0,\tau>k, S_k>y^+\right\}\subset 
 \{A_\tau>x\}
 \]
Hence,
\begin{align*}
  &\pr(A_{\tau}>x) \ge  \sum_{k=0}^N\pr(A_\tau>x, \overline X_{k-1}\le y^+, X_k>y^+, \tau>k)\\ 
  &\ge 
  \sum_{k=0}^N\pr\left(\overline X_{k-1}\le y^+,\tau>k-1, X_k>y^+, \min_{l\le \sqrt{2x/a}} (S_{l+k}-S_k +R + j^{1/2+\varepsilon})>0\right)\\
  &\ge (1-\delta)\sum_{k=0}^N \pr\left(\overline X_{k-1}\le y^+,\tau>k-1\right)\overline F(y^+).
 \end{align*}
 For every fixed $k$ we have
 $$
 \pr\left(\overline X_{k-1}\le y^+,\tau>k-1\right)
 \to \pr\left(\tau>k-1\right),\quad x\to\infty.
 $$
 Furthermore, $\sum_{k=0}^N \pr(\tau>k)\to\e\tau$ as
 $N\to\infty$.  Therefore, we can pick sufficiently large $N$  
 such that 
 \[
  \liminf_{x\to\infty}\sum_{k=0}^N \pr\left(\overline X_{k-1}\le y^+,\tau>k-1\right)\ge (1-\delta)\e\tau.
 \]
 Then, for all $x$ sufficiently large,
 \[
  \pr(A_{\tau}>x)\ge (1-\delta)^2\e\tau\overline F(y^+).
 \]
 As $\delta>0$ is arbitratily small we arrive at the conclusion.
\end{proof}

{\it Completion of the proof of Theorem~\ref{cor:logarithmic.upper.bound}.}
 The upper bound follows from Lemma~\ref{lem:logarithmic.upper.bound}. 
 The lower bound follows from Lemma~\ref{lem:lower.bound}. The rough asymptotics follows immediately from the lower and upper bounds and  from the observation that 
 \begin{equation}
 \label{observation}
  \sup_{|y|\le x\rho(x)}\left|\frac{\log \overline F(x)}{\log \overline F(x+y)}-1\right|\to0,
 \end{equation}
 where $\rho(x)\to0$.
 
 To prove \eqref{observation} we note that by \eqref{sc2} and \eqref{sc3}
 \begin{align}\label{eq:insensitivity}
  g(x+y)-g(x)&=\int_x^{x+y} g'(t) dt \le 
  \gamma_0 \int_x^{x+y} \frac{g(t)}{t}  dt
  \le \gamma_0 \frac{g(x)}{x^{\gamma_0}}\int_x^{x+y} \frac{1}{t^{1-\gamma_0}}  dt\\
  \nonumber 
  &\le \gamma_0\frac{g(x)}{x^{\gamma_0}}\frac{y}{x^{1-\gamma_0}} 
  =\gamma_0g(x)\frac{y}{x},\quad y>0. 
 \end{align}
 This implies that, as $x\to\infty$, 
\begin{equation}
\label{g-ratio}
 \sup_{|y|\le x\rho(x)}\left|\frac{g(x+y)}{g(x)}-1\right|\to0.
\end{equation}
Recalling that
$$
\log\overline{F}(x)\sim-g(x)-2\log x,
$$
one obtains easily \eqref{observation}.
\section{Proof of Theorem~\ref{thm:exact.asymptotics}}\label{sec:asymp.beta.12}
Set 
\[
    h(x):= \frac{\sqrt{2ax}}{g(\sqrt{2ax})}
\]
and 
\begin{equation}\label{eq:y-}
    y=\sqrt{2ax} - Ch(x) \log x ,
\end{equation}   
where $C>\frac{5/4}{1-\gamma_0}$.  
First we will split the probability $\pr(A_{\tau}>x)$ as follows
\begin{align*}
 \pr(A_{\tau}>x)&=\pr(A_{\tau}>x, \overline X_\tau\le y)
                +\pr\left(A_{\tau}>x, \overline X_\tau> \sqrt{2ax}-\frac{1}{\log x}h(x)\right)\\
                &+\pr\left(A_{\tau}>x, \overline X_\tau\in \left[y, \sqrt{2ax}-\frac{1}{\log x}h(x))\right]\right)=:P_1+P_2+P_3.
\end{align*}
%where $R(x)\to 0$ very slowly (say, as $1/\ln x$). 
The first term will be estimated using the exponential bound proved in Lemma~\ref{lem:chebyshev}. 
\begin{lemma}\label{lem:p1}
Let $\e[X_1]=-a$ and $\mathbf{Var}(X_1)<\infty$. 
Assume that~\eqref{sc1} and~\eqref{sc2} hold for some $\gamma_0<1/2$ together with~\eqref{sc4}. Then, 
    \[
        P_1 = o(\overline F(\sqrt{2ax})).
    \]
\end{lemma}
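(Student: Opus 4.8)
The plan is to split according to the duration of the excursion and apply the exponential bound of Lemma~\ref{lem:chebyshev} with the truncation level $y$ from~\eqref{eq:y-}, that is, with $\lambda=\lambda(x)=g(y)/y$. First I would write
\[
    P_1\le\sum_{n=1}^\infty\pr(A_n\ge x,\overline X_n\le y,\tau=n+1)
    \le\sum_{n=1}^\infty\exp\left\{-\lambda\frac{x}{n}-\lambda\frac{an}{2}+C\lambda^2 n\right\}.
\]
Since $\gamma_0<1/2$, the assumption~\eqref{sc2}--\eqref{sc3} gives $\lambda=g(y)/y\ll y^{-1/2}$, hence $\lambda^2 n$ is negligible compared with $\lambda a n/2$ uniformly in the relevant range of $n$; so up to replacing $a/2$ by $a/2-o(1)$ one may absorb the $C\lambda^2 n$ term and work with $\sum_n\exp\{-\lambda(x/n+I n)\}$ where $I=a/2-o(1)$. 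The elementary inequality $x/n+In\ge 2\sqrt{Ix}$ (AM--GM), with the minimiser $n^*\approx\sqrt{x/I}$, then yields the bound $\sum_n\exp\{-\lambda(x/n+In)\}\le C\sqrt{x}\,e^{-2\lambda\sqrt{Ix}}$ (comparing the sum with the integral, as in Lemma~\ref{lem:logarithmic.upper.bound}, or just crudely by counting the $O(\sqrt{x})$ terms near $n^*$ and summing a geometric tail away from it). So $P_1\le C\sqrt{x}\exp\{-2\lambda\sqrt{Ix}\}$.

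The heart of the argument is then to show $2\lambda\sqrt{Ix}\ge g(\sqrt{2ax})+\frac{3}{2}\log x+o(\log x)$, which together with $\overline F(\sqrt{2ax})\sim e^{-g(\sqrt{2ax})}(2ax)^{-1}$ gives $P_1\le C\sqrt{x}\,x^{-3/2}\cdot\overline F(\sqrt{2ax})\cdot x\cdot\text{(something }\to0)$ — i.e. $o(\overline F(\sqrt{2ax}))$. Since $I=a/2-o(1)$ we have $2\sqrt{Ix}=\sqrt{2ax}(1-o(1))$, so it suffices to control $\lambda\sqrt{2ax}=\sqrt{2ax}\,g(y)/y$. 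Write $\sqrt{2ax}/y=1/(1-Ch(x)\log x/\sqrt{2ax})=1/(1-C\log x/g(\sqrt{2ax}))=1+C\log x/g(\sqrt{2ax})+\dots$, using $h(x)\log x/\sqrt{2ax}=\log x/g(\sqrt{2ax})\to0$ by~\eqref{sc5}. For $g(y)$, the insensitivity estimate~\eqref{eq:insensitivity}/\eqref{g-ratio} applies because $|\sqrt{2ax}-y|=Ch(x)\log x=o(\sqrt{2ax})$; more precisely~\eqref{eq:insensitivity} gives
\[
    g(\sqrt{2ax})-g(y)\le\gamma_0\,g(\sqrt{2ax})\frac{Ch(x)\log x}{\sqrt{2ax}}
    =\gamma_0 C\log x\cdot\frac{g(\sqrt{2ax})}{g(\sqrt{2ax})}=\gamma_0 C\log x,
\]
so $g(y)\ge g(\sqrt{2ax})-\gamma_0 C\log x$. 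Combining,
\[
    \lambda\sqrt{2ax}=\frac{\sqrt{2ax}}{y}\,g(y)\ge\left(1+\frac{C\log x}{g(\sqrt{2ax})}\right)\bigl(g(\sqrt{2ax})-\gamma_0 C\log x\bigr)
    \ge g(\sqrt{2ax})+(1-\gamma_0)C\log x-o(\log x),
\]
where the cross term $C\log x/g(\sqrt{2ax})\cdot g(\sqrt{2ax})=C\log x$ survives and the product of the two small corrections is $o(\log x)$ by~\eqref{sc5}. Thus $2\lambda\sqrt{Ix}\ge g(\sqrt{2ax})+(1-\gamma_0)C\log x-o(\log x)$, and since $C>\tfrac{5/4}{1-\gamma_0}$ we get $(1-\gamma_0)C>5/4$, so the exponent beats $g(\sqrt{2ax})+\tfrac54\log x$. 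Hence
\[
    P_1\le C\sqrt{x}\,e^{-g(\sqrt{2ax})}x^{-(1-\gamma_0)C+o(1)}
    \le C e^{-g(\sqrt{2ax})}x^{1/2-5/4+o(1)}=o\bigl(x^{-1}e^{-g(\sqrt{2ax})}\bigr)=o(\overline F(\sqrt{2ax})).
\]

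The main obstacle is purely bookkeeping: making sure that (i) the $C\lambda^2 n$ term in Lemma~\ref{lem:chebyshev} really is negligible uniformly over the dominant range $n\asymp\sqrt{x}$ (this needs $\lambda\sqrt{x}\to\infty$ slower than $1/\lambda$, i.e. $\gamma_0<1/2$, which is exactly where this hypothesis enters), and (ii) the $\sqrt{x}$ polynomial prefactor coming from the Laplace-type sum, together with the $x^{-1}$ in $\overline F(\sqrt{2ax})$, forces the precise threshold $C>\tfrac{5/4}{1-\gamma_0}$ rather than $C>\tfrac1{1-\gamma_0}$; the extra $1/4$ is the $x^{1/4}$ seen in the rough bound. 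Everything else is a routine combination of AM--GM, comparison of a sum to the Bessel-type integral (formula (25) p.~146 of~\cite{BTIT} as in Lemma~\ref{lem:logarithmic.upper.bound}), and the insensitivity estimate~\eqref{eq:insensitivity}. One should also double-check that the elementary bound $e^u\le1+u+u^2$ and the integration-by-parts step in Lemma~\ref{lem:chebyshev} are legitimately applicable here, i.e. that $\lambda_{n,j}\le1$ and $\lambda_{n,j}\le g(y)/y$ throughout, which hold since $\lambda=g(y)/y\to0$.
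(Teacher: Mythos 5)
Your overall strategy is the same as the paper's: apply the exponential Chebyshev bound of Lemma~\ref{lem:chebyshev} with $\lambda=g(y)/y$ and $y$ from \eqref{eq:y-}, reduce the Laplace-type sum to $e^{-2\lambda\sqrt{Ix}}$ times a polynomial prefactor, use $\gamma_0<1/2$ to replace $2\lambda\sqrt{Ix}$ by $\lambda\sqrt{2ax}-o(1)$, and then combine the expansion of $\sqrt{2ax}/y$ with the insensitivity estimate \eqref{eq:insensitivity} to get $\lambda\sqrt{2ax}\ge g(\sqrt{2ax})+(1-\gamma_0)C\log x-o(\log x)$. All of these steps match the paper's argument and are correct, and you correctly identify where the hypothesis $\gamma_0<1/2$ enters.

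The gap is in the final bookkeeping, exactly the place you flagged as the main obstacle. You bound the sum crudely by $C\sqrt{x}\,e^{-2\lambda\sqrt{Ix}}$; since $\overline F(\sqrt{2ax})\asymp x^{-1}e^{-g(\sqrt{2ax})}$, this forces the exponent to exceed $g(\sqrt{2ax})+\tfrac32\log x$ (your own stated target at the start of the second paragraph). But what you actually establish is only $(1-\gamma_0)C\log x$ with $(1-\gamma_0)C>5/4$: if $C$ is close to the threshold $\tfrac{5/4}{1-\gamma_0}$ fixed in \eqref{eq:y-}, you get $\tfrac54\log x+\epsilon\log x$ with $\epsilon$ arbitrarily small, and your concluding chain
\[
P_1\le C\sqrt{x}\,e^{-g(\sqrt{2ax})}x^{-(1-\gamma_0)C+o(1)}\le Ce^{-g(\sqrt{2ax})}x^{1/2-5/4+o(1)}=o\bigl(x^{-1}e^{-g(\sqrt{2ax})}\bigr)
\]
is false, because $x^{-3/4+o(1)}$ is not $o(x^{-1})$. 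To close the argument with the constant $5/4$ you must use the sharper prefactor of \eqref{eq:p1}, obtained in the paper from $\int_0^\infty e^{-\lambda x/t-\lambda I t}\,dt=\sqrt{4x/I}\,K_1(2\lambda\sqrt{Ix})$ and $K_1(z)\sim\sqrt{\pi/2z}\,e^{-z}$ — a route you mention but do not carry out; alternatively, keep the crude $\sqrt{x}$ and simply take $C>\tfrac{3/2}{1-\gamma_0}$ in \eqref{eq:y-}, which costs nothing elsewhere in the proof of Theorem~\ref{thm:exact.asymptotics}. A word of caution if you go the Bessel route: the asymptotics actually give a prefactor of order $\sqrt{x/I}\,(\lambda\sqrt{Ix})^{-1/2}\asymp x^{1/2}/\sqrt{g(\sqrt{2ax})}$, which is larger than the $x^{1/4}$ claimed in \eqref{eq:p1} whenever $g(t)=o(t)$; so the extra $\log x$'s here are genuinely delicate, and enlarging $C$ to exceed $\tfrac{3/2}{1-\gamma_0}$ is the safe fix.
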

\begin{proof}
According to~\eqref{eq:p1},
 \begin{align*}
    P_1&\le  Cx^{1/4}\exp\{-2\lambda\sqrt{Ix}\},
 \end{align*}
 where $I=\frac{a}{2}-C\lambda$ and $\lambda = g(y)/y$.
 Since~\eqref{sc2} holds for some $\gamma_0<1/2$, 
 $g^2(y)/y\to 0$ and hence 
 \[
  P_1 \le Cx^{1/4}\exp\left\{-\frac{g(y)}{y}\sqrt{2ax}\right\}.
 \]
 Then,
 \[
  \frac{P_1}{\overline F(\sqrt{2ax})}\le C x^{5/4}
  \exp\left\{g(\sqrt{2ax})-\frac{g(y)}{y}\sqrt{2ax}\right\}.
 \]
 To finish the proof it is sufficient to show that 
 \begin{equation}\label{eq:p1.intermediate}
  g(\sqrt{2ax})-\frac{g(y)}{y}\sqrt{2ax} + \frac{5}{4}\log x \to -\infty,\quad  x\to \infty. 
 \end{equation}
We first note that 
\begin{align*}
 d(x)&:=g(\sqrt{2ax})-\frac{g(y)}{y}\sqrt{2ax}  = 
 g(\sqrt{2ax}) - \frac{g(y)}{1-C\frac{\log x}{g(\sqrt{2ax})}}\\
     &= g(\sqrt{2ax}) - g(y) + (C+o(1)) \log x \frac{g(y)}{g(\sqrt{2ax})}.
\end{align*}
Using~\eqref{sc3} and~\eqref{sc2} one can see that 
\begin{align}
 \label{eq:bound.g}   
 g(\sqrt{2ax}) - g(y) &= \int_y^{\sqrt{2ax}} g'(z) dz 
 \le \gamma_0\int_y^{\sqrt{2ax}} \frac{g(z)}{z} dz 
 \le \gamma_0 \frac{g(y)}{y} (\sqrt{2ax}-y)\\
 &=\gamma_0 C \frac{g(y)}{y}\log x \frac{\sqrt{2ax}}{g(\sqrt{2ax})}.
 \nonumber
\end{align}
Hence, 
\[
 d(x)\le \left(\gamma_0\frac{\sqrt{2ax}}{y}-1 \right) (C+o(1)) \frac{g(y)}{g(\sqrt{2ax})}\log x. 
\]
According to~\eqref{g-ratio}, $g(y)\sim g(\sqrt{2ax})$.
Therefore,~\eqref{eq:p1.intermediate} is valid for any $C$ satisfying $C(\gamma_0-1)+\frac{5}{4}<0$. 

\end{proof}
Next lemma gives the term with the main contribution. 
\begin{lemma}\label{lem:p2}
 Under the assumptions of Lemma~\ref{lem:p1} we have the following estimate 
 \[
  P_2\le (1+o(1))\overline F(\sqrt{2ax}), \quad x\to \infty. 
 \]
\end{lemma}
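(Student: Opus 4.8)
The term $P_2$ involves the event that the maximal increment is very close to $\sqrt{2ax}$, namely $\overline X_\tau > \sqrt{2ax} - \frac{1}{\log x}h(x)$. The plan is to localize the big jump: on the event $\{A_\tau > x, \overline X_\tau > \sqrt{2ax} - \frac1{\log x}h(x)\}$ there is (with overwhelming probability) a \emph{unique} index $k$ at which $X_k$ exceeds a high threshold, and $k$ must occur near the beginning of the excursion. So I would write
\[
P_2 \le \sum_{k\ge 1}\pr\Big(A_\tau>x,\ \tau>k,\ \overline X_{k-1}\le \theta,\ X_k > \sqrt{2ax}-\tfrac{1}{\log x}h(x)\Big)
+ (\text{error}),
\]
where $\theta$ is an intermediate threshold (something like $\sqrt{2ax}-2Ch(x)\log x$ or a fixed fraction of $\sqrt{2ax}$), and the error term accounts for two or more large jumps — this is controlled by $F\in\mathcal S^*$ (Proposition~\ref{prop:ds13}) and is $o(\overline F(\sqrt{2ax}))$ since $\gamma_0<1/2$, exactly as in the $S^*$-computation already carried out in the proof of Proposition~\ref{prop:ds13}.

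For each fixed $k$, by the Markov property at time $k$ I would condition on $S_k = v$ with $v > \sqrt{2ax}-\frac{1}{\log x}h(x)$ (the walk before $k$ stays in $(0,\theta)$ and $X_k$ is the one big jump). After this jump, the refined LLN / Marcinkiewicz--Zygmund argument used in Section~\ref{sec:area.via.maximum} (the set $B_v$ and \eqref{refined_lln}) shows that the walk descends along $v-an$, so the total area after time $k$ is $\frac{v^2}{2a}(1+o(1))$; combined with the at-most-$\theta\cdot k$ contributed before time $k$, the event $\{A_\tau>x\}$ forces $v \gtrsim \sqrt{2ax}(1-o(1))$ — but we already have $v > \sqrt{2ax}-\frac{1}{\log x}h(x)$, and the point is that this \emph{lower} cutoff is what matters. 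So the $k$-th summand is at most $\pr(\tau>k-1,\ \overline X_{k-1}\le\theta)\,\overline F\big(\sqrt{2ax}-\frac{1}{\log x}h(x)-o(h(x))\big)$. Summing over $k$ gives $\e\tau$, and the insensitivity estimate \eqref{g-ratio} (applicable because $\frac{1}{\log x}h(x) = \frac{1}{\log x}\cdot\frac{\sqrt{2ax}}{g(\sqrt{2ax})} = o\big(\frac{\sqrt{2ax}}{g(\sqrt{2ax})}\big)$, i.e. a perturbation of size $o(x\rho(x))$ with $\rho\to0$) yields
\[
\overline F\Big(\sqrt{2ax}-\tfrac{1}{\log x}h(x)\Big) \sim \overline F(\sqrt{2ax}),
\]
so $P_2 \le (1+o(1))\e\tau\,\overline F(\sqrt{2ax})$.

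Wait — the statement claims $P_2 \le (1+o(1))\overline F(\sqrt{2ax})$, not $\e\tau\,\overline F(\sqrt{2ax})$, so I must be more careful: the constraint $A_\tau>x$ together with $v$ only slightly above $\sqrt{2ax}$ actually forces $k=1$ (if $k\ge 2$, the walk would need area roughly $\frac{v^2}{2a} < x + o(\text{stuff})$ but loses a positive fraction $\ge (k-1)$ steps worth before reaching the jump, which — because the jump itself is pinned near the minimal value $\sqrt{2ax}$ — cannot be compensated). So in fact only the $k=1$ term survives to leading order, the $k\ge2$ terms being $o(\overline F(\sqrt{2ax}))$, and $\pr(\tau>0)=1$, giving the sharp constant $1$. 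The main obstacle is precisely this quantitative argument showing that a jump pinned within $\frac{1}{\log x}h(x)$ of $\sqrt{2ax}$ leaves no room to also "waste" a linear-in-$k$ amount of area before jumping; this requires tracking the $o(h(x))$-corrections from the refined LLN against the $\frac{1}{\log x}h(x)$ window and using $xg'(x)\to\infty$ (assumption \eqref{sc4}) to ensure $h(x)$ does not shrink too fast relative to $\sqrt x$. The remaining pieces — the $\mathcal S^*$ bound on multiple jumps and the insensitivity of $\overline F$ — are routine given the tools already assembled in the excerpt.
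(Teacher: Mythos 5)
The first half of your proposal is, stripped of unnecessary machinery, exactly the paper's argument: write $y^*=\sqrt{2ax}-h(x)/\log x$, decompose over the first index $n+1$ at which $X_{n+1}>y^*$, bound the $n$-th term by $\pr(\tau>n)\,\overline F(y^*)$ using independence, sum to get $\e\tau\,\overline F(y^*)$, and then use the insensitivity estimate (via \eqref{eq:bound.g} and \eqref{g-ratio}) to replace $\overline F(y^*)$ by $(1+o(1))\overline F(\sqrt{2ax})$. Note that for this step one simply \emph{discards} the constraint $\{A_\tau>x\}$; no Markov-property analysis of the post-jump trajectory, no set $B_v$, and no $\mathcal S^*$ control of double jumps is needed here (multiple large jumps are dealt with separately in $P_{32}$). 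What this yields is
\[
P_2\le (1+o(1))\,\e\tau\,\overline F(\sqrt{2ax}),
\]
and that is also what the proof of Theorem~\ref{thm:exact.asymptotics} actually requires, since the matching lower bound is $\e\tau\,\overline F(\sqrt{2ax})$. The factor $\e\tau$ appears to have been dropped from the printed statement of the lemma by mistake; you were right to be suspicious of the constant $1$.

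However, your attempted repair -- arguing that only the $k=1$ term survives, so that the constant drops from $\e\tau$ to $1$ -- is incorrect, and this is a genuine gap rather than a cosmetic one. A big jump at time $k\ge 2$ does not ``waste'' area: on $\{\tau>k-1\}$ one has $S_{k-1}>0$, hence $S_k=S_{k-1}+X_k>X_k$, the area accumulated before time $k$ is nonnegative, and the event in $P_2$ only requires $\overline X_\tau>y^*$, not that the jump be pinned in a thin window near $\sqrt{2ax}$ (jumps of size $\sqrt{2ax}+Cx^{1/2-\varepsilon}$, say, still have probability $(1+o(1))\overline F(\sqrt{2ax})$ by insensitivity, and then $A_\tau>x$ with conditional probability tending to $1$). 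This is precisely the lower-bound construction of Lemma~\ref{lem:lower.bound}: for every fixed $k$, the trajectories with the large jump at time $k$ contribute $(1+o(1))\pr(\tau>k-1)\overline F(\cdot)$, and all of these events are contained in the event defining $P_2$. Consequently $\liminf_{x\to\infty}P_2/\overline F(\sqrt{2ax})\ge\e\tau$, so the bound with constant $1$ that you set out to prove is false whenever $\e\tau>1$ (which always holds here, since $\pr(X_1>0)>0$). The correct fix is to restore the factor $\e\tau$ in the statement, at which point the simple union-bound argument of your first paragraph suffices.
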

\begin{proof}
Put 
\[
 y^*=\sqrt{2ax}-\frac{h(x)}{\log x}. 
\]

 By the total probability formula, 
 \begin{align*}
  P_2&\le \sum_{n=0}^\infty\mathbf{P}(A_\tau\geq x, \overline{X}_n \le y^*, X_{n+1}>y^* ,\tau>n)\\
        &\le \sum_{n=0}^\infty\mathbf{P}( X_{n+1}>y^*) \pr(\tau>n) 
        = \e[\tau] \overline F(y^*).
 \end{align*}
Now note that by~\eqref{eq:bound.g} and~\eqref{g-ratio}
\begin{align*}
 \frac{\overline F(y^*)}{\overline F(\sqrt{2ax})}
    &\le (1+o(1)) e^{g(\sqrt{2ax})-g(y^*)}
    \le (1+o(1)) e^{\frac{\gamma_0 g(y^*)}{y^*}(\sqrt{2ax}-y^*)}\\ 
    &\le (1+o(1)) e^{\frac{\gamma_0 g(y^*)}{y^*}\frac{1}{\log x}\frac{\sqrt{2ax}}{g(\sqrt{2ax})}} = 1+o(1). 
\end{align*}
%since $R(x)\to 0$. 
Then the statement immediately follows. 

\end{proof}

We will proceed to the analysis of  $P_3$. 
Fix some $\delta>0$ and set
\[
 z=\frac{1}{a}\left(\sqrt{2ax} + \delta\sqrt{x} \right).
\]
We will split $P_3$ further as follows, 
\begin{align*}
 P_3\le P_{31}+P_{32}+P_{33} &:= 
 \pr\left(A_{\tau}>x, \overline X_\tau\in \left[y, \sqrt{2ax}-R(x)h(x)\right];J_1;\tau\le 
 z\right )\\
 &+
 \pr\left(A_{\tau}>x, \overline X_\tau\in \left[y, \sqrt{2ax}-R(x)h(x)\right];J_{\ge 2}, \tau\le z\right)\\
 &+\pr(\tau>z),
\end{align*}
where 
\[
 J_1 =\left\{
  \mbox{there exists $k\in (1, \tau)$ such that } X_k>y \mbox{ and } 
  \max_{1\le i\le \tau, i\neq k} X_i \le y
 \right\}
\]
and, correspondingly, 
\[
 J_{\ge 2} =\left\{
  \mbox{there exist $k, l\in (1, \tau)$ such that } X_k>y \mbox{ and  } X_l>y
 \right\}
\]

We will start with easier terms $P_{32}$ and $P_{33}$.
To deal with these terms we  will use Proposition~\ref{prop:ds13}. 
One can see then 
\begin{lemma}\label{lem:p33}
Let the assumptions~\eqref{sc1},\eqref{sc2} and \eqref{sc4} hold for $\gamma_0<1/2$. Then,
\[
 P_{33} = o(\overline F(\sqrt{2ax})), \quad x\to\infty.
\]
\begin{proof}
 We have, by Proposition~\ref{prop:ds13}, 
 \[
  P_{33}\le \pr(\tau>z)\le (\mathbf{E}\tau+o(1)) 
  \overline F(az)  = 
  O\left(\overline F(\sqrt{2ax}+\delta \sqrt{x})\right).
 \]
 Therefore,
 \begin{align*}
  \frac{P_{33}}{\overline F(\sqrt{2ax})}
  &\le C
  e^{g(\sqrt{2ax})-g(\sqrt{2ax}+\delta \sqrt{x})}.
 \end{align*}
By the mean value theorem and by the assumption~\eqref{sc4},
\[
g(cx)-g(x)\to\infty,\quad x\to\infty
\]
for every $c>1$. This completes the proof.
\end{proof}

\end{lemma}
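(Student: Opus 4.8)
The plan is to recognise $P_{33}$ as a pure exit-time tail and then to combine the exact asymptotics for $\pr(\tau>n)$ with the fact that $\overline F$ is \emph{not} insensitive to shifts of order $\sqrt{x}$ at the point $\sqrt{2ax}$. By the way $P_3$ was split, $P_{33}=\pr(\tau>z)$ with $z=a^{-1}(\sqrt{2ax}+\delta\sqrt{x})$, so that $az=\sqrt{2ax}+\delta\sqrt{x}=cu$, where $u:=\sqrt{2ax}$ and $c:=1+\delta/\sqrt{2a}>1$. Under~\eqref{sc1},~\eqref{sc2} with $\gamma_0<1/2$ and~\eqref{sc4} we are in case (b) of Proposition~\ref{prop:ds13} — the verification that these conditions imply the hypotheses of Theorem~3.2 of~\cite{DS13} is the one already carried out in the proof of that proposition — so~\eqref{eq:ds13} gives
\[
 P_{33}=\pr(\tau>z)\le(\e\tau+o(1))\,\overline F(az)=(\e\tau+o(1))\,\overline F\bigl(\sqrt{2ax}+\delta\sqrt{x}\bigr),\qquad x\to\infty.
\]

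Next I would reduce everything to an elementary estimate on $g$. Using~\eqref{sc1},
\[
 \frac{\overline F(\sqrt{2ax}+\delta\sqrt{x})}{\overline F(\sqrt{2ax})}
 =(1+o(1))\,\frac{u^{2}}{(cu)^{2}}\,e^{g(u)-g(cu)}\le C\,e^{g(u)-g(cu)},
\]
so it suffices to show that $g(cu)-g(u)\to\infty$ as $u\to\infty$ for every fixed $c>1$. Since $g$ is continuously differentiable, the mean value theorem gives $g(cu)-g(u)=(c-1)u\,g'(\xi)$ for some $\xi\in(u,cu)$; by~\eqref{sc4} we have $tg'(t)\to\infty$, so for any $M>0$ there is $u_{0}$ with $tg'(t)>M$ for $t\ge u_{0}$, and hence for $u>u_{0}$, $g'(\xi)>M/\xi>M/(cu)$, which yields $g(cu)-g(u)>M(c-1)/c$. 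Letting $M\to\infty$ shows $g(cu)-g(u)\to\infty$, so $\overline F(\sqrt{2ax}+\delta\sqrt{x})=o(\overline F(\sqrt{2ax}))$, and the lemma follows.

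I do not expect a genuine obstacle here: the only mildly delicate point is that~\eqref{sc4} must be exploited through the mean value theorem (one has no monotonicity of $g'$ at one's disposal), and that $c>1$ is a constant fixed once the free parameter $\delta$ has been chosen, so the estimate is uniform in the regime considered. The substantive work in the analysis of $P_3$ sits entirely in the remaining one-big-jump term $P_{31}$, not in $P_{33}$.
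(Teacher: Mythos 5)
Your proof is correct and follows essentially the same route as the paper: bound $P_{33}$ by $\pr(\tau>z)$, apply \eqref{eq:ds13} from Proposition~\ref{prop:ds13} to get $(\e\tau+o(1))\overline F(az)$, and then show $g(cu)-g(u)\to\infty$ for fixed $c>1$ via the mean value theorem together with \eqref{sc4}. The only difference is that you spell out the mean value theorem step explicitly, which the paper leaves as a one-line remark.
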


\begin{lemma}\label{lem:p32}
 Let the conditions of Lemma~\ref{lem:p2} hold. 
Then, 
\begin{equation}\label{eq:p32}
    P_{32} = o(\overline F(\sqrt{2ax})). 
\end{equation}
\end{lemma}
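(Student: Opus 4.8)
The plan is to bound $P_{32}$ by the probability that the excursion contains at least two jumps exceeding $y$ before time $z$, and to show that this event is far less likely than a single big jump because the ``cost'' of producing the area $x$ with two jumps of total size $\sqrt{2ax}$ is essentially the same as one jump, while the combinatorial/probabilistic price of two independent large jumps is $\overline F(\cdot)^2$-like, which is negligible against $\overline F(\sqrt{2ax})$ in the subexponential regime. More precisely, on $J_{\ge2}$ there are indices $k<l\le\tau\le z$ with $X_k>y$ and $X_l>y$, so
\[
P_{32}\le \sum_{1\le k<l\le z}\pr\left(X_k>y,\ X_l>y,\ \tau>l-1\right).
\]
First I would discard the constraint that these are the \emph{two largest} jumps (only an upper bound is needed) and estimate each summand. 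Since $\{\tau>l-1\}$ depends only on $X_1,\dots,X_{l-1}$ and $X_l$ is independent of this, the summand with $k<l$ factorises after conditioning: $\pr(X_k>y,X_l>y,\tau>l-1)\le \overline F(y)\,\pr(X_k>y,\tau>k-1)\cdot\big(\text{something bounded}\big)$. Actually the cleanest route is $\pr(X_k>y, X_l>y, \tau>l-1)\le \overline F(y)\pr(X_k>y,\tau>k-1)$ using independence of $X_l$ from $(X_1,\dots,X_{l-1})$, and then summing over $l\le z$ gives a factor $z\,\overline F(y)$, while summing $\pr(X_k>y,\tau>k-1)$ over $k$ is bounded by $\mathbf E\tau\,\overline F(y)(1+o(1))$ via the Lemma in Section~2 (or simply by $\sum_k\pr(\tau>k-1)\overline F(y)=\mathbf E\tau\,\overline F(y)$, since $\pr(X_k>y,\tau>k-1)\le\overline F(y)\pr(\tau>k-1)$). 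This yields
\[
P_{32}\le C\, z\,\overline F(y)^2 .
\]

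Next I would convert this into a statement about $\overline F(\sqrt{2ax})$. We have $z=O(\sqrt x)$, and using~\eqref{sc1} together with the insensitivity estimate~\eqref{g-ratio} and~\eqref{eq:bound.g} one gets $\overline F(y)\le (1+o(1))\overline F(\sqrt{2ax})\,e^{g(\sqrt{2ax})-g(y)}$ with $g(\sqrt{2ax})-g(y)=O(\log x)$ because $\sqrt{2ax}-y=Ch(x)\log x$ and $g(y)/y\sim g(\sqrt{2ax})/\sqrt{2ax}$; so $\overline F(y)\le x^{C'}\overline F(\sqrt{2ax})$ for some constant $C'$. Hence
\[
\frac{P_{32}}{\overline F(\sqrt{2ax})}\le C\,z\,\overline F(y)\cdot\frac{\overline F(y)}{\overline F(\sqrt{2ax})}
\le C\, x^{1/2+C'}\,\overline F(\sqrt{2ax})\, e^{g(\sqrt{2ax})-g(y)}
\le C\, x^{D}\,\overline F(\sqrt{2ax})
\]
for some constant $D$. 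Since $\overline F(\sqrt{2ax})\sim e^{-g(\sqrt{2ax})}(2ax)^{-1}$ and, by~\eqref{sc4}–\eqref{sc5}, $g(\sqrt{2ax})/\log x\to\infty$, the right-hand side tends to $0$. This gives~\eqref{eq:p32}.

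The main obstacle is bookkeeping the exponent: one has to be careful that the polynomial losses — the factor $z=O(\sqrt x)$ from summing over the location of the second jump, the factor $x^{-1}$ from the $t^{-2}$ in~\eqref{sc1}, and the $e^{g(\sqrt{2ax})-g(y)}=x^{O(1)}$ coming from moving $\overline F$ from the level $y$ to the level $\sqrt{2ax}$ — are all genuinely polynomial in $x$, so that they are killed by the single surviving factor $\overline F(\sqrt{2ax})=e^{-g(\sqrt{2ax})+O(\log x)}$, which decays faster than any power of $x$ by~\eqref{sc5}. One subtlety worth stating explicitly: the bound $\overline F(y)\le x^{C'}\overline F(\sqrt{2ax})$ requires exactly the same computation as in Lemma~\ref{lem:p1} and Lemma~\ref{lem:p2}, namely $g(\sqrt{2ax})-g(y)\le \gamma_0\frac{g(y)}{y}(\sqrt{2ax}-y)=\gamma_0\gamma_0 C\log x(1+o(1))$ via~\eqref{sc3}; there is no circularity because $P_{32}$ does not feed back into those estimates. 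Everything else is a routine application of independence and the union bound.
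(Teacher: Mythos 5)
Your proof is correct and follows essentially the same route as the paper: a union bound over the locations of two jumps exceeding $y$ within $\tau\le z$, giving a bound of the form $(\text{polynomial in }x)\cdot\overline F(y)^2$, which is then shown to be $o(\overline F(\sqrt{2ax}))$ because $g(\sqrt{2ax})-g(y)=O(\log x)$ by \eqref{eq:bound.g} while $e^{-g(y)}$ decays superpolynomially by \eqref{sc5}. Your use of independence to reduce the combinatorial factor from $z^3$ (as in the paper's $\sum_{k\le z}k^2\overline F(y)^2/2$) to $z\,\mathbf{E}\tau$ is a harmless refinement that changes nothing in the conclusion.
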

\begin{proof}
 We can use the formula of total probability to write
 \[
  P_{32} \le \sum_{k=1}^z \pr(\tau>k, J_{\ge 2})
  \le \sum_{k=1}^z \frac{k^2}{2}\overline F(y)^2.
 \]
Then,
\[
  \frac{P_{32}}{\overline F(\sqrt{2ax})}
  \le C x^{3/2}\frac{\overline F(y)^2}{\overline F(\sqrt{2ax})}
  \le C x^{1/2} e^{g(\sqrt{2ax})-2g(y)}.
\]
Using now~\eqref{eq:bound.g} one can see that 
\begin{align*}
 \frac{P_{32}}{\overline F(\sqrt{2ax})} 
 \le C x^{1/2} e^{C\ln x -g(y)}\to 0,
\end{align*}
in view of~\eqref{sc5}.
\end{proof}

We are left to analyse $P_{31}$. For that introduce 
\[
 \mu(y):=\min\{n\ge 1: X_k>y\}.
\]
%We will first localise this only large jump. 
%Let $\varepsilon>0$ be a small constant. 
%For that 
%represent
%\begin{align*}
% P_{31}&\le 
% Q_{1}+Q_2
% :=
% \pr\left(A_{\tau}>x, \overline X_\tau\in \left[y, \sqrt{2ax}-R(x)h(x)%\right];J_1;
% \mu(y)\le x^\varepsilon, 
% \tau\le 
% z\right )\\
% &+\pr(x^\varepsilon<\mu(y)<\tau\le z).
%\end{align*}
%{\bf It seems that t his localisation and the next Lemma are not needed}
%\begin{lemma}\label{lem:q2}
% Let the assumptions~\eqref{sc1},\eqref{sc2} and \eqref{sc4} hold for $%\gamma_0<1/2$. Then,
%\[
% Q_{2} = o(\overline F(\sqrt{2ax})), \quad x\to\infty.
%\]
%\end{lemma}
%\begin{proof}
% We have, 
% \begin{align*}
%  Q_2\le \sum_{x^\varepsilon}^z
%  \pr(\tau>k-1,\mu(y)=k)
%  \le \overline F(y) 
%  \sum_{x^\varepsilon}^\infty \pr(\tau>k-1)\le 
%  C\overline F(y)  \sum_{x^\varepsilon}^\infty \overline F(ak).
% \end{align*}
%Then, using \eqref{eq:bound.g} one obtains
%\[
% \frac{Q_{2}}{\overline F(\sqrt{2ax})} 
% \le C e^{g(\sqrt{2ax})-g(y)}\sum_{x^\varepsilon}^\infty \overline F(ak)
% \le Cx^{C}\sum_{x^\varepsilon}^\infty \overline F(ak).
%\]
%However, condition \eqref{sc4} implies that 
%$\overline F(ak)$ is decreasing faster than any power function. 
%Hence, the sum 
%$\sum_{x^\varepsilon}^\infty \overline F(ak)$ decreases faster than any power %function. This shows the required convergence to 0.
%\end{proof}
Now we will complete the proof with the following Lemma. 
\begin{lemma}\label{lem:q1}
 Let the assumptions~\eqref{sc1},\eqref{sc2} and~\eqref{sc4} hold for $\gamma_0<1/2$. Then,
\[
 P_{31} = o(\overline F(\sqrt{2ax})), \quad x\to\infty.
\]
\end{lemma}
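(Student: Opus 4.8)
\textbf{Proof plan for Lemma~\ref{lem:q1}.}

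The plan is to exploit the structure of the event defining $P_{31}$: on $J_1$ there is exactly one big jump, located at some index $k=\mu(y)\in(1,\tau)$, while all other increments are bounded by $y$, and moreover the maximum increment $\overline X_\tau$ is actually \emph{strictly below} $\sqrt{2ax}-R(x)h(x)$ (with $R(x)$ ranging so that this is between $y$ and $y^*=\sqrt{2ax}-h(x)/\log x$). First I would condition on $\mu(y)=k$ and on the value $S_k\in dv$ of the walk right after the big jump, using the Markov property to split the path into the pre-jump part (a walk of length $k-1$ staying positive with increments $\le y$, which converges in probability to $\{\tau>k-1\}$ as $x\to\infty$) and the post-jump part (a fresh walk started at height $v$, all of whose increments are $\le y$, with $v$ confined to $[y,\sqrt{2ax}-R(x)h(x)]$). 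The contribution from large $k$ (say $k>N$) is controlled exactly as in the proof of Proposition~\ref{prop:joint} via $\sum_{k>N}q_k\to0$, so it suffices to bound the contribution for each fixed $k$.

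For fixed $k$ the post-jump walk starts at $v\le\sqrt{2ax}-R(x)h(x)$ and has increments truncated at $y$; the key point is that to accumulate area exceeding $x$ starting from such a $v$, this truncated walk must survive for an atypically long time --- roughly $\sqrt{2x/a}$ steps --- since by the law of large numbers a walk started at $v$ with negative drift $-a$ produces area $\approx v^2/2a<x$ and dies around time $v/a$. So the plan is to apply the exponential bound of Lemma~\ref{lem:chebyshev} to the shifted walk: writing $A_\tau\le (k-1)\cdot(\text{height before jump}) + v + \widetilde A_{\widetilde\tau}$ where $\widetilde A$ is the area of the post-jump excursion with increments $\le y$, and bounding $\pr(\widetilde A_{\widetilde\tau}>x-v-\dots,\ \overline X\le y\mid S_0=v)$ by $\sum_n\exp\{-\lambda x/n-\lambda I n\}$ with $\lambda=g(y)/y$. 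Since $v$ is bounded above by $\sqrt{2ax}-R(x)h(x)$, the ``deficit'' $x-v^2/2a$ that the truncated walk must make up is of order $\sqrt{x}\,h(x)R(x)/\ldots$, and summing the Bessel-type bound as in Lemma~\ref{lem:logarithmic.upper.bound} produces a factor $e^{-2\lambda\sqrt{I(x-v^2/2a)}}$ that, combined with the density $\overline F(v)\,dv$ from the big jump and integrated over $v$, yields something of smaller order than $\overline F(\sqrt{2ax})$; here is where $\gamma_0<1/2$ (so that $g(y)^2/y\to0$) and \eqref{sc4} are used, exactly as in Lemmas~\ref{lem:p1} and~\ref{lem:p32}.

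I expect the main obstacle to be making the above splitting quantitatively tight enough near the upper endpoint $v\approx\sqrt{2ax}-R(x)h(x)$: there the deficit the truncated post-jump walk must cover is only moderately large, so the exponential gain from Lemma~\ref{lem:chebyshev} is only moderate, and one must check that integrating $\overline F(v)\,e^{-2\lambda\sqrt{I(x-v^2/2a)}}\,dv$ against the slowly-varying-in-the-exponent weight still beats $\overline F(\sqrt{2ax})$ uniformly over the window $R(x)h(x)\le \sqrt{2ax}-v$, i.e.\ for $v$ ranging from $y$ up to $y^*$. The bookkeeping has to be arranged so that the worst case $v=y^*$ (smallest deficit) is handled by the same inequality $g(\sqrt{2ax})-g(y^*)\to$ bounded used in Lemma~\ref{lem:p2}, while the smaller $v$ give genuine exponential decay; I would set this up by a dyadic decomposition of the window $[\,y,\,y^*\,]$ in $v$ and estimate each piece separately, then sum. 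Once that uniform estimate is in place, combining it with the fixed-$k$ reduction and the tail-in-$k$ estimate $\varepsilon_N\to0$ completes the proof that $P_{31}=o(\overline F(\sqrt{2ax}))$, and hence, together with Lemmas~\ref{lem:p1}, \ref{lem:p2}, \ref{lem:p33}, \ref{lem:p32} and the lower bound from Theorem~\ref{cor:logarithmic.upper.bound}, the proof of Theorem~\ref{thm:exact.asymptotics}.
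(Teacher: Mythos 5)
Your overall architecture (one-big-jump decomposition, Markov property at the jump time, exponential bound for the post-jump truncated walk, integration over the landing height $v$) has the right shape, but the plan does not close, for two related reasons. First, the gain you claim from Lemma~\ref{lem:chebyshev} is not the one the method delivers: if the post-jump walk starts at height $v$ with all increments $\le y$, its area over an excursion of length $n$ is $nv+\widetilde A_n$ with $\widetilde A_n$ the area of a walk started at zero, so the Chebyshev bound gives exponent $-\lambda\left(x/n-v+In\right)$, and optimizing over $n$ yields $-\lambda\bigl(2\sqrt{Ix}-v\bigr)\approx-\lambda\bigl(\sqrt{2ax}-v\bigr)$, \emph{not} $-2\lambda\sqrt{I(x-v^2/(2a))}$; the latter would be the cost of producing the whole deficit by a fresh excursion from zero and is far larger when $v$ is close to $\sqrt{2ax}$. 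Second, and fatally, with the truncation level $y$ you propose one has $\lambda=g(y)/y\sim g(\sqrt{2ax})/\sqrt{2ax}$, and at the worst endpoint $v=y^*=\sqrt{2ax}-h(x)/\log x$ the correct exponent is $\lambda(\sqrt{2ax}-y^*)\sim 1/\log x\to0$: the exponential bound gives no gain at all over the trivial estimate $\e\tau\,\overline F(y^*)$, which is of the \emph{same} order as $\overline F(\sqrt{2ax})$, not $o(\cdot)$. No dyadic decomposition can repair this, because the failure occurs in the top block.

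The paper's proof resolves exactly this point by a further splitting $J_1=J_{11}\cup J_{12}$ according to whether all increments other than the single big one are $\le x^\varepsilon$ or not. The event $J_{12}$ (a second, moderate jump above $x^\varepsilon$) is disposed of by a crude two-jump union bound together with~\eqref{sc5}. On $J_{11}$ the exponential bound is applied with the much smaller truncation level $x^\varepsilon$, i.e.\ with $\lambda=g(x^\varepsilon)/x^\varepsilon$, which is polynomially large in $x$; the elementary inequality $x/k+ak/2\ge\sqrt{2ax}$ then converts the deficit $h(x)/\log x$ into a superpolynomially small factor $e^{-\lambda h(x)/\log x}$, and the choice $\varepsilon=\frac{1}{4(1-\gamma_0)}$ keeps the error term $\lambda^2 z$ bounded (this is where $\gamma_0<1/2$ enters). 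This secondary truncation is the key idea missing from your plan; without it, or some substitute for it, the estimate $P_{31}=o(\overline F(\sqrt{2ax}))$ cannot be reached by the exponential-bound route.
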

\begin{proof}
First represent event $J_1=J_{11}\cup J_{12}$, where
\begin{align*}
 J_{11}&:=
 \{
    \mbox{$X_k>y$ for exactly one $k\in(0,\tau)$ and 
    $X_i\le x^\varepsilon$ for all other 
        $i<\tau$ }
    \}\\
J_{12}&:=
 \{
    \mbox{$X_k>y$ for exactly one $k\in(0,\tau)$ and 
          $X_i>x^\varepsilon $ for some
        $i\neq k, i<\tau$}
    \}.    
\end{align*}
Then, 
\begin{align*}
 Q_{2}&:=\pr\left(A_{\tau}>x, \overline X_\tau\in \left[y, \sqrt{2ax}-\frac{1}{\log x}h(x)\right];J_{12},
 \tau\le 
 z\right )\\ 
 &\le \sum_{j=1}^z \pr(\tau=j, J_{12})
 \le \sum_{j=1}^z \frac{j^2}{2}\overline F(y)\overline F(x^\varepsilon) 
 \le z^3 \overline F(y)\overline F(x^\varepsilon).
  \end{align*}

Then, 
\begin{align*}
 \frac{Q_{2}}{\overline F(\sqrt{2ax})}\le 
 Cx^{3/2+2\varepsilon} e^{g(\sqrt{2ax})-g(y)-g(x^{\varepsilon}))}
\end{align*}
By~\eqref{eq:bound.g}, 
\[
 g(\sqrt{2ax})-g(y)\le C\ln x.
\]
Then, in view of the relation~\eqref{sc5} we have 
\[
 g(\sqrt{2ax})-g(y)-g(x^{\varepsilon}))\le -4\ln x, 
\]
which implies that $Q_{2} = o(\overline F(\sqrt{2ax}))$.

To estimate 
\[
Q_{1}:=\pr\left(A_{\tau}>x, \overline X_\tau\in \left[y, \sqrt{2ax}-\frac{1}{\log x}h(x)\right];J_{11},
 \tau\le 
 z\right )
\]
we make use of the exponential bound given in Lemma~\ref{lem:chebyshev}. Put  
putting 
\[
    x^+(k)=x-k\left(\sqrt{2ax}-\frac{h(x)}{\log x}\right).
\]
Then, we have, 
\begin{align*}
    Q_{1}&= 
    \sum_{k=0}^{z-1} \sum_{j=1}^k \pr\left(A_k>x, \max_{i\neq j, i\le k} X_i\le x^\varepsilon, X_j\in\left[y, \sqrt{2ax}-\frac{h(x)}{\log x}\right] , \tau=k+1\right)\\
 &\le 
 \sum_{k=1}^z (k+1)\pr(A_k>x^+(k), \overline X_k\le x^\varepsilon)\overline F(y)\\
 &\le Cx^{1/2}\overline F(y)
 \sum_{k=1}^z 
 \exp\left\{
                        -\lambda \frac{x^+(k)}{k}  -\frac{a \lambda}{2} k +C\lambda^2 k 
                    \right\},
\end{align*}
where $\lambda = \frac{g(x^\varepsilon)}{x^\varepsilon}$.
Now note that 
\[
 -\lambda \frac{x^+(k)}{k}  -\frac{a \lambda}{2} k 
 =-\lambda\left(-\sqrt{2ax}+\frac{h(x)}{\log x} +\frac{x}{k}+\frac{ak}{2}\right).
\]
Since 
\[
 \frac{x}{k}+\frac{ak}{2}\ge \sqrt{2ax},\quad k\ge1, 
\]
we obtain, 
\[
 -\lambda \frac{x^+(k)}{k}  -\frac{a \lambda}{2} k 
 \le -\lambda \frac{h(x)}{\log x},\quad k\ge1.
\]
Thus, 
\[
 Q_{1} \le Cx e^{-\lambda h(x)/\log x+\lambda^2 z}
 \overline{F}(y).
\]
Next, we can pick $\varepsilon = \frac{1}{4(1-\gamma_0)} $ to achieve  
\begin{align*}
 \lambda^2 z &\le C \left(\frac{g(x^\varepsilon)}{x^\varepsilon}\right)^2x^{1/2}
 =C 
 \left(\frac{g(x^\varepsilon)}{x^{\varepsilon(1-1/(4\varepsilon))}}\right)^2
 =C 
 \left(\frac{g(x^\varepsilon)}{x^{\gamma_0\varepsilon}}\right)^2\\
 &<C\sup_t\left(\frac{g(t)}{t^{\gamma_0}}\right)^2<\infty,
\end{align*}
by the condition~\eqref{sc2}. Note that since $\gamma_0<1/2$, the 
picked $\varepsilon<1/2$ as well. 
Then,
\begin{align*}
 \frac{Q_{1}}{\overline F(\sqrt{2ax})}\le 
 Cx^{2} e^{g(\sqrt{2ax})-g(y)-\lambda h(x)/\log x},
\end{align*}
and using~\eqref{eq:bound.g}, 
\begin{align*}
 \frac{Q_{1}}{\overline F(\sqrt{2ax})}\le 
 Cx^{C} e^{-\lambda h(x)/\log x}.
\end{align*}
Finally noting that 
\begin{align*}
 \lambda h(x) = \frac{g(x^\varepsilon)}{x^{\varepsilon}} \frac{\sqrt{2ax}}{g(\sqrt{2ax})}
\end{align*}
is decreasing polynomially we obtain required convergence to $0$. 
The polynomial decay can be immediately seen for  $g(x)=x^{\gamma_0}$. 
 However, a proper proof goes as follows,  
\begin{align*}
 g(C\sqrt{x}) &= g(x^\varepsilon)+\int_{x^\varepsilon}^{C\sqrt{x}} g'(t)dt
 \le g(x^\varepsilon)+\gamma_0\int_{x^\varepsilon}^{C\sqrt{x}} \frac{g(t)}{t}dt\\
 &\le g(x^\varepsilon)+
 \gamma_0\int_{x^\varepsilon}^{C\sqrt{x}} \frac{g(t)}{t^{\gamma_0}} t^{\gamma_0-1}dt
 \le g(x^\varepsilon)+\frac{g(x^\varepsilon)}{x^{\varepsilon\gamma_0}}\int_{x^\varepsilon}^{C\sqrt{x}} t^{\gamma_0-1}dt\\
 &\le g(x^\varepsilon)+C\frac{g(x^\varepsilon)}{x^{\varepsilon \gamma_0}} x^{\gamma_0/2}
 \le C g(x^\varepsilon)x^{\gamma_0(1/2-\varepsilon)}
\end{align*}
Therefore,
\[
\lambda h(x)\ge x^{1/2-\varepsilon} x^{-\gamma_0(1/2-\varepsilon)}
\]
\end{proof}

{\it Completion of the proof of Theorem~\ref{thm:exact.asymptotics}}
Combination of the preceding Lemmas give us the upper bound. 
The lower bound has been shown in~\eqref{lower.bound} under even weaker conditions.

%%%%%%%%%%%%%%%%%%%%%%%%%%%%%%%%%%%%%%%%%%%%%%%%%%%%%%%%%%%%%%%%%%%%%%%%%%%%%%%%%%%%%%%%%%%%%%%%%%%%%%%%
%%%%%%%%%%%%%%%%%%%%%%%%%%%%%%%%%%%%%%%%%%%%%%%%%%%%%%%%%%%%%%%%%%%%%%%%%%%%%%%%%%%%%%%%%%%%%%%%%%%%%%%%
%%%%%%%%%%%%%%%%%%%%%%%%%%%%%%%%%%%%%%%%%%%%%%%%%%%%%%%%%%%%%%%%%%%%%%%%%%%%%%%%%%%%%%%%%%%%%%%%%%%%%%%%

\end{document}